\definecolor{darkgreen}{rgb}{0,0.55,0}
\newtheorem{theorem}{Theorem}[section]
\newtheorem{corollary}[theorem]{Corollary}
\newtheorem{lemma}[theorem]{Lemma}
\newtheorem{proposition}[theorem]{Proposition}
\theoremstyle{definition}
\newtheorem{definition}[theorem]{Definition}
\theoremstyle{remark}
\newtheorem{remark}[theorem]{Remark}
\numberwithin{equation}{section}
\numberwithin{theorem}{section}
\newcommand{\norm}[1]{\left\Vert#1\right\Vert}
\def\be{\begin{equation}}
\def\ee{\end{equation}}
\begin{document}
\title{Classifying signals under a finite Abelian group action: the finite dimensional setting}

\date{\today}

\author{Jameson Cahill \and Andres Contreras \and Andres Contreras Hip}

\address[J.Cahill]{Department of Mathematical Sciences, New Mexico State University, Las Cruces,
New Mexico, USA}
\email{jamesonc@nmsu.edu}

\address[A.Contreras]{Department of Mathematical Sciences, New Mexico State University, Las Cruces,
New Mexico, USA}
\email{acontre@nmsu.edu}

\address[A.Contreras Hip]{Department of Mathematical Sciences, New Mexico State University, Las Cruces,
New Mexico, USA}
\email{albertch@nmsu.edu}

\maketitle
\begin{abstract}
Let $G$ be a finite group acting on $\mathbb{C}^{{N}} .$ We study the problem of identifyng the class in $\mathbb{C}^{{N}} / G$ of a given signal: this encompasses several types of problems in signal processing. Some instances include certain generalizations of phase retrieval, image recognition, the analysis of textures, etc.
In our previous work \cite{prev}, based on an algebraic approach, we constructed a Lipschitz translation invariant transform--the case when $G$ is cyclic. Here, we extend our results to include all finite Abelian groups. Moreover, we show the existence of a new transform that avoids computing high powers of the moduli of the signal entries--which can be computationally taxing. The new transform does not enjoy the algebraic structure imposed in our earlier work and is thus more flexible. Other (even lower) dimensional representations are explored, however they only provide an almost everywhere (actually generic) recovery which is not a significant drawback for applications. Our constructions are locally robust and provide alternatives to other statistical and neural networks based methods.
\end{abstract}

\section{Introduction}

Some of the big challenges that technical fields face have to do with finding ever more efficient and reliable ways to navigate and process huge pieces of information. Classification, extracting patterns and recognition are some of the main goals. At least since the early `80's, statistical approaches have promised to address many of these fundamental issues by using methods that emulate human learning in their decision making algorithms. In recent years, there has been a renewed interest in these artificial intelligence quests, partly propelled by the success of {\it deep reinforcement learning} in creating arguably the best known-to-date artificial Chess and Go players \cite{AlphaZero}. At the same time, applications of neural networks in signal processing are well known \cite{LeCun, cirecsan2010deep, liu2014deeply, eppenhof2018deformable} . The work of Mallat \cite{Mallat} on the construction of a wavelet-based {\it group invariant scattering transform} stable under small diffeomorphisms, has been met with great enthusiasm and has inspired several other works \cite{bruna2013invariant, bronstein2017geometric, rohe2017svf} in which by now is one of the most active areas of research in applied mathematics. The construction in \cite{Mallat} gives a family of transforms, one important example of which is a Lipschitz translation invariant function of signals in $L^2 (\mathbb{R}^{{N}})$ that unlike the modulus of the Fourier transform, is Lipschitz with respect to small deformations. The examples in \cite{Mallat}, constructed in infinite dimensional settings, are invariant and stable but are not perfect discriminative tools between classes of signals because the transforms are not injective. On the other hand, for real life applications it is crucial to understand the finite dimensional setting and to have a transform with complete discriminative power. In \cite{prev} we studied this problem and drawing from algebraic tools, were able to develop a framework that allows one to obtain discriminative invariants with respect to finite group actions under certain hypotheses. Furthermore, our transforms in \cite{prev} come with explicit Lipschitz bounds and the dimension of the target space is linear with respect to the dimension of the signal space $\mathbb{C}^{{N}} .$ As an application we obtained Lipschitz injective translation invariant maps in finite dimensional problems. Our construction uses polynomial invariants \cite{dufresne} to generate a map $F$ into a high dimensional space $\mathbb{C}^{{N}({N}+1)/2}$ that separates orbits. We reduce the dimension of the target by using a linear transformation without losing injectivity. The last step is to modify this map to make it Lipschitz while keeping injectivity and the dimension of the target. Applications to specific group actions depend on the polynomial invariants used on whether or not they satisfy what we called the {\it non-parallel property}. We prove that some specific choice of monomials in cyclic cases $\mathbb{Z}_m$ satisfy this property and thus yield the desired transform in these situations. The case of general group actions was left open. In this work we extend our previous results to include all finite Abelian group actions. The main motivation for our present work however, comes from the need to address a much more important problem: the final map in \cite{prev} has the form
\[\norm{x} F\left(\frac{x}{\norm{x}}\right).\] 
The normalization makes the map above Lipschitz which is therefore stable from a theoretical point of view. However, when implementing this one still has to compute powers of the components of $x$ at different points and store this information to proceed. The normalization does not avoid this which is disastrous: two vectors $x$ and $y$ with small entries can be both mapped to $\vec 0$ while lying on very different orbits, due to rounding errors. Thus, we are led to consider a potential transform that is not purely algebraic. This is a challenge because almost all steps in our construction in \cite{prev} relied on not only an algebraic but projective structure of $F.$ The dimension reduction in \cite{prev} relied on an algebraic geometric argument, counting dimensions of intersections of projective varieties.

Here we construct low-dimensional invariant maps for general finite Abelian group actions where one only needs to compute powers of phases of entries (whence the map is not algebraic, although the construction relies on our earlier construction).  Our new maps thus solve the main computational problem at hand. We see that although not globally continuous, the maps are Lipschitz in a generic sense that we specify below (see Theorem \ref{mainthm} for more details). In the construction of our transforms, we introduce a family of complete sets of measurements. The new measurements are no longer complex polynomials; this is a step away from purely algebraic methods and their limitations. In our setting we are given a finite Abelian group $G$ acting on $\mathbb{C}^{{N}}$ {\it unitarily} (see \eqref{Gaction}). Our main result, Theorem \ref{mainthm} gives the existence of a map $\Phi:\mathbb{C}^{{N}} \to \mathbb{C}^{3{N} +1}$ which separates $G-$orbits. The components of the transform $\Phi$ grow linearly on the moduli of the entries of $x \in \mathbb{C}^{{N}},$ and there is a universal constant $C$ such that if the signals $x,y \in \mathbb{C}^{{N}}$ have the same support, then
\[
\norm{\Phi (x) - \Phi (y)} \leq C  \inf_{g \in G} \|x - gy\| .
\]
The above is a generic form of stability. A few comments are in order: 

\begin{itemize}
\item[1)] The transform in \cite{prev} is a good abstract low dimensional discriminative map. The conditions on which the construction there relied, applied to cyclic groups which is the natural finite dimensional analogue of translation. The hypothesis needed, namely the non-parallel property as we called it, could only be verified for cyclic groups because we used a particular set of monomials introduced in \cite{dufresne}. In this work, we extend our result in \cite{prev} to cover all finite Abelian groups; the non-parallel property now follows from an explicit construction using a result in \cite{domokos}. We go further here in that we are concerned in constructing a much more stable map from the point of view of computation and much more suitable for implementation.  
\item[2)] The reason why a global Lipschitz condition is not possible in our construction is that $ \min_{1 \leq i \leq {N}, \vert x_i \vert \neq 0} \vert x_i \vert $ is not continuous and this is a factor we need to use as a coefficient at some point to make the the map Lipschitz away from signals with zero entries. This is unavoidable in a sense because of the use of phase maps, which are not continuous, and one needs to add factors vanishing at the origin to tame these discontinuities.
\item[3)] Our transform lives in  {$\mathbb{C}^{3N+1},$} while in \cite{prev} the corresponding map lived in {$\mathbb{C}^{2N+1}.$} The loss of {$N$} in the dimension of the target space comes from the fact {that in} order to overcome the problem of having to compute high powers of entries, we need to isolate the information coming exclusively from the phases, but to truly separate signals in different $G$-orbits, we need to store the moduli of the entries which adds the extra $n$ dimensions. This is not a significant loss as the dimension of the target is still linear in the dimension of the space of signals ${N}.$ 
\item[4)] The moduli of entries we need to include in our map correspond, in the contexts of  audio and image processing, to the modulus of the Fourier transform--the well known translation invariant already mentioned.
\item[5)] Our result covers the important case $G=\mathbb{Z}_n\times \mathbb{Z}_m$ relevant in image processing: given a $2$d image, we can consider pixels as entries in a large matrix with a particular color (some number in some allowable set). Then an image can be translated using integer multiples of the canonical basis in $\mathbb{C}^2.$ A crucial problem is to know when two signals correspond to the same image, being only translates of each other. Our transform solves the classification problem using a low dimensional transform. Refinements of it can be used to solve more general classification problems where there is distortion, for example where two images correspond to the same if they differ by the application of a diffeomorphism close to the identity. Additionally, we foresee combining this and other tools such as optimal mass transport and neural network methods to yield much stronger results. 
\end{itemize}

 The next section is devoted to introduce some notation and present our main results. In the third section we construct the new transform $\Phi$ and prove its properties under the assumption that there is a tensorial map that separates $G$-orbits. We prove in the fourth section, with the aid of a characterization of \cite{domokos} of separating polynomials, that a tensorial map exists for every finite Abelian group. We also prove that with our explicit tensorial map, the results in \cite{prev} also apply to the general finite Abelian group case. Finally, in the last section we present results about $N$-dimensional almost everywhere transforms that yield injective transforms on $\mathbb{C}^N/G$.

\vskip.1in
{\bf Acknowledgements.}
The work of A. Contreras was partially supported by a grant from the Simons Foundation \# 426318.
\vskip.1in

\section{Notation and main results}

Let $G$ act on $\mathbb{C}^{{N}}.$ {For $x , y \in {\mathbb{C}}^N$} we will write $x \sim y$ if there is a $g \in G$ such that $x = gy.$ The actions we consider are \textit{unitary}, that is there is a group homomorphism
\[
\sigma : G \to U({N})
\]
such that the action is represented by
\[
gx = \sigma (g) x.
\]
We will also write $\mathbb{C}^{{N}} / \sim$ to denote the space of orbits (this is a slight abuse of notation as this space depends on the specific representation of G). We recall the quotient metric on $G / \sim,$
\[
d_G([x], [y]) = \inf_{g \in G} \|x - gy\|,
\]
where $[x]$ denotes the orbit of $x$ under the group action $G.$
In \cite{prev}, we studied the problem of constructing Lipschitz low dimensional representations of $G-$orbits in $\mathbb{C}^N$ for the case when $G=\mathbb{Z}_m$ is a cyclic group. In particular, we proved the following:
\begin{theorem}\label{prevmain}
There is a $\mathbb{Z}_m -$invariant map $\Phi : \mathbb{C}^{{N}} \to \mathbb{C}^{2{N} + 1}$ that induces an injective map $\tilde{\Phi} : \mathbb{C}^{{N}} / \mathbb{Z}_m \to \mathbb{C}^{2{N} + 1},$ and a constant $C > 0$ depending only on $m$ such that
\[
\|\Phi (x) - \Phi(y)\| \leq C d_{\mathbb{Z}_m} ([x] , [y]),
\]
for every $x , y \in \mathbb{C}^{{N}}.$
\end{theorem}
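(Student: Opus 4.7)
The plan is to carry out the three-stage construction signposted in the introduction. First, I would produce a finite set of homogeneous polynomial invariants for the $\mathbb{Z}_m$-action that separates orbits. Since the action is unitary, after diagonalizing $\sigma(g)$ we may assume each standard basis vector is scaled by an $m$th root of unity, so the invariant ring is generated by monomials. Dufresne's construction \cite{dufresne} supplies an explicit family of $N(N+1)/2$ monomial invariants whose corresponding map $F:\mathbb{C}^N\to\mathbb{C}^{N(N+1)/2}$ is orbit-separating. The key structural property to verify---the non-parallel property---says that no two of these monomials become ``projectively parallel'' along generic orbits; for the specific Dufresne monomials this reduces to an explicit calculation exploiting that the relevant exponent vectors sit in general position modulo $m$.

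Second, I would reduce the target dimension by a generic $\mathbb{C}$-linear map $L:\mathbb{C}^{N(N+1)/2}\to\mathbb{C}^{2N+1}$. The image $F(\mathbb{C}^N)$ is a constructible set of complex dimension $N$, so a standard generic-projection argument---a dimension count on the associated secant/join variety, in the spirit of Bertini---shows that once the target dimension is at least $2N+1$, a generic $L$ restricts to an injection on $F(\mathbb{C}^N)$. The non-parallel property is precisely what guarantees the secant variety has the expected dimension, so that a generic choice of $L$ is not obstructed. Composing yields an invariant polynomial map $\Psi:=L\circ F:\mathbb{C}^N\to\mathbb{C}^{2N+1}$ that still separates $\mathbb{Z}_m$-orbits.

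Third, since $\Psi$ is polynomial (homogeneous of some degree $d\ge 1$) and therefore not Lipschitz, I would replace it by the positively homogeneous-of-degree-$1$ rescaling
\[
\Phi(x) := \|x\|\,\Psi\!\left(\frac{x}{\|x\|}\right)\quad(x\ne 0), \qquad \Phi(0):=0.
\]
This map remains $\mathbb{Z}_m$-invariant and orbit-separating: if $\Phi(x)=\Phi(y)$ with $x,y\ne 0$, reading off $\|x\|=\|y\|$ from suitable norms and then $\Psi(x/\|x\|)=\Psi(y/\|y\|)$ forces $x\sim y$ by injectivity of $\Psi$ modulo $\mathbb{Z}_m$ on the unit sphere. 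Smoothness of $\Psi$ on the compact unit sphere yields a constant $C$ with $\|\Phi(x)-\Phi(y)\|\le C\|x-y\|$, and $\mathbb{Z}_m$-invariance of $\Phi$ then upgrades this to the quotient-metric bound $C\,d_{\mathbb{Z}_m}([x],[y])$.

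The main obstacle---and the reason the result was originally stated only for cyclic groups---is the verification of the non-parallel property for Dufresne's monomial invariants; once it is in place, the dimension reduction is an algebraic-geometric exercise and the Lipschitz rescaling is essentially formal. The only delicate bookkeeping in the last step is to ensure injectivity survives the normalization: this uses that $\Psi$ restricted to the unit sphere is injective \emph{modulo orbit equivalence} rather than strictly injective, which is exactly what $\mathbb{Z}_m$-invariance of $\Psi$ provides.
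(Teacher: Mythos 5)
Your overall architecture matches the construction this theorem comes from (Dufresne-type monomial invariants $F$, a generic linear reduction to $\mathbb{C}^{2N+1}$, then the degree-one rescaling $\|x\|F(x/\|x\|)$), but the way you distribute the work between the steps contains a genuine gap. You assign the non-parallel property to the dimension-reduction step, claiming it is ``precisely what guarantees the secant variety has the expected dimension.'' That is not its role: the generic-projection result (Theorem \ref{prevdimred} here, Theorem 3.1 of \cite{prev}) needs only that the polynomial map separates orbits, and its dimension count goes through without any parallelism hypothesis. The non-parallel property (Definition \ref{npp}) is tailored to the \emph{normalization} step, and that is exactly the step where your argument breaks. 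From $\Phi(x)=\Phi(y)$, i.e.\ $\|x\|\Psi(x/\|x\|)=\|y\|\Psi(y/\|y\|)$, you claim to ``read off $\|x\|=\|y\|$ from suitable norms'' and then conclude $\Psi(x/\|x\|)=\Psi(y/\|y\|)$. But $\Psi$ is built from monomials of different degrees, so it is not homogeneous and $\|\Psi\|$ is not constant on the unit sphere; nothing lets you extract $\|x\|=\|y\|$ directly. All you actually get is $\Psi(x/\|x\|)=\lambda\,\Psi(y/\|y\|)$ with $\lambda=\|y\|/\|x\|>0$, i.e.\ the values are positively \emph{proportional}, not equal. This is precisely the situation the non-parallel property is designed for: it yields $x/\|x\|\sim y/\|y\|$, and then a separate monotonicity argument (compare the proof of Proposition \ref{nppF}, where one rescales $y$ by powers of $\lambda$ and uses that the resulting norm expression is strictly increasing in $\lambda$) forces $\lambda=1$, hence $\|x\|=\|y\|$ and $x\sim y$. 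Without this mechanism your injectivity claim for $\Phi$ is unsupported.

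A second, related omission: once you insist on performing the linear reduction \emph{before} normalizing (as the final map $\|x\|\,(L\circ F)(x/\|x\|)$ requires), the proportionality argument above must be run for the reduced map $L\circ F$, so you need the non-parallel property (or an equivalent statement) to survive the generic projection $L$; this compatibility between the reduction and the normalization is the content of the relevant theorem in \cite{prev} (the analogue of Theorem 3.7 there) and cannot simply be waved through. The Lipschitz estimate itself is fine in outline --- boundedness of $\Psi$ and of its derivatives on the sphere, together with the elementary inequality controlling $\bigl\|\,\|x\|^{-1}x-\|y\|^{-1}y\,\bigr\|$ by a multiple of $\|x-y\|/\min\{\|x\|,\|y\|\}$, gives the bound, and invariance upgrades it to $d_{\mathbb{Z}_m}$ --- but the injectivity portion of your step three needs to be rewritten around the non-parallel property rather than around an unavailable norm identification.
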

The map in Theorem \ref{prevmain} is discriminative, Lipschitz and the representation lives on a low dimensional space - the target space is of dimension $\mathcal{O} ({N}).$ 

One important motivation for the result above was the application to translation invariant transforms in finite dimensions. More precisely, the main result in \cite{prev} applies to $G = \mathbb{Z}_{{N}},$ $m \in \mathbb{N}$ acting on $\mathbb{C}^{{N}}$ by 
\[
T^k x(j) = x (j-k \mod {N}) \mbox{ for } k=1, \ldots {N}.
\].

The map constructed in \cite{prev} is based on monomial invariants. {More specifically, there is a set of monomials of the form 
$$
F(x)=((x_i^{m_i})_{i=1}^{{N}},(x_i^{a_{ij}}x_j^{b_{ij}})_{i\neq j})
$$
with the property that $F(x)=F(y)$ if and only if $x= gy$ for some $g\in\mathbb{Z}_m$.} Some of these powers can become very large and this induces problems due to computational limitations.

{One way to overcome this problem is to only put the powers on the phases, that is we can use the measurements (see Proposition \ref{propnewmaps} below)
$$
\left(\left(\left(\frac{x_i}{|x_i|}\right)^{m_i}\right)_{i=1}^{{N}},\left(\left(\frac{x_i}{|x_i|}\right)^{a_{ij}}\left(\frac{x_j}{|x_j|}\right)^{b_{ij}}\right)_{i\neq j}\right)
$$
where if either $x_i=0$ or $x_j=0$ we set the corresponding entries in $\Theta_F$ to $0$.} {While this seems to solve the problem of computing large powers of the entries of $x$ this new map $\Theta_F$ poses some new problems. First of all, it is not continuous and therefore has no hope of being Lipschitz. While we will not be able to completely overcome this problem, we will construct a map that is Lipschitz almost everywhere. Putting the powers only on the phases poses another challenge: we lose the algebraic structure of $F$ and therefore cannot apply the techniques in Theorem 3.1 of  \cite{prev} to reduce the dimension of the target space. We will address this problem in this paper and construct a low dimensional $G$-invariant representation of signals, though we have to add $N$ more measurements to achieve this. We also extend the results in \cite{prev} to the case of any finite Abelian group.}

\subsection{Main results}

Let $G$ be an Abelian group acting on $\mathbb{C}^{{N}}$ with a representation in terms of unitary matrices, i.e., there is a set of unitary matrices
\[
U_1, U_2, \ldots , U_k
\]
such that for any $g \in G,$ there are exponents $\alpha_1 , \alpha_2 , \ldots \alpha_k$ such that for any $x \in \mathbb{C}^{{N}},$ we have
\begin{equation} \label{Gaction}
g x = {U_1}^{\alpha_1} {U_2}^{\alpha_2} \cdots {U_k}^{\alpha_k} x.
\end{equation}
In this paper we prove the following:
\begin{theorem} \label{mainthm}
Let $G$ be a finite Abelian group acting on $\mathbb{C}^{{N}}$ according to \eqref{Gaction}. Then there is a map $\Phi:\mathbb{C}^{{N}} \to \mathbb{C}^{3{N} +1}$ which separates $G-$orbits. Moreover, each component of $\Phi$ can be chosen to grow linearly on the moduli of the entries of $x \in \mathbb{C}^{{N}},$ and there is a constant $C$ such that if $x,y \in \mathbb{C}^{{N}}$ are such that $\{ k : x_k \neq 0 \} = \{ k : y_k \neq 0 \},$ then
\[
\norm{\Phi (x) - \Phi (y)} \leq C d_G ([x] , [y])
\]
\end{theorem}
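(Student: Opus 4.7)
The plan is to combine a phase-only version of a monomial separating map with the full vector of entry-moduli, after a preliminary simultaneous diagonalization. Since $G$ is a finite Abelian group of unitary matrices, the family $\{\sigma(g)\}_{g\in G}$ is commuting and unitary, so it can be simultaneously diagonalized by a unitary change of coordinates. After this change of basis each $g\in G$ acts diagonally,
\[
gx=(\chi_1(g)x_1,\ldots,\chi_N(g)x_N),
\]
where $\chi_i:G\to S^1$ are characters. In particular the $N$ maps $x\mapsto |x_i|$ are $G$-invariant, and the unitary coordinate change is an isometry on both sides of the desired inequality, so it is harmless to assume this diagonal form from the start.

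Next I would invoke the tensorial separating map (whose existence for every finite Abelian $G$ is the content of Section~4, following the characterization of separating polynomials in \cite{domokos}): there exist exponents so that
\[
F(x)=\bigl((x_i^{m_i})_{i=1}^N,\,(x_i^{a_{ij}}x_j^{b_{ij}})_{i\neq j}\bigr)
\]
satisfies $F(x)=F(y)\iff x\sim y$. I then define the phase-only map $\Theta_F$ by replacing $x_i$ with $x_i/|x_i|$ when $x_i\ne 0$ and setting a component to $0$ if any of its factors vanishes. Since the action is diagonal, $\Theta_F$ is $G$-invariant, and on any fixed support stratum $\mathcal{S}_S=\{x:\{i:x_i\ne 0\}=S\}$ the pair $\bigl((|x_i|)_i,\Theta_F(x)\bigr)$ still separates $G$-orbits: knowing the moduli fixes $|x_i|$ and $|x_j|$, so any $F$-component differs from the corresponding $\Theta_F$-component only by a factor determined by the moduli.

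To compress the target dimension and obtain the stability estimate I would set
\[
\Phi(x)=\Bigl((|x_i|)_{i=1}^N,\ \mu(x)\,L(\Theta_F(x)),\ \mu(x)\Bigr)\in\mathbb{C}^{3N+1},
\]
where $\mu(x)=\min_{i:\,x_i\ne 0}|x_i|$ (with $\mu(0)=0$) and $L:\mathbb{C}^{N(N+1)/2}\to\mathbb{C}^{2N}$ is a fixed linear map that preserves orbit separation on each stratum. The first $N$ coordinates are trivially Lipschitz and of linear growth in the moduli. The factor $\mu(x)$ tames the discontinuity of the phase map, because on each $\mathcal{S}_S$ both $\mu$ and $L\circ\Theta_F$ are locally Lipschitz and bounded, so the product rule together with the unitarity of the optimizing $g\in G$ yields $\norm{\Phi(x)-\Phi(y)}\le C\norm{x-gy}$ whenever $x,y$ lie on the same stratum, with a constant uniform across strata. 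Linear growth is immediate: each non-modulus component is a bounded quantity multiplied by $\mu(x)\le\|x\|_\infty$.

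The step I expect to be the main obstacle is constructing the linear dimension-reduction $L$. In \cite{prev} this was accomplished through a projective/algebraic-geometric intersection count applied to the polynomial map $F$, a technique that breaks down for $\Theta_F$ since its image is no longer a variety. I would instead exploit the fact that on each stratum $\mathcal{S}_S$ the relation ``$\Theta_F(x)=\Theta_F(y)$ implies $x\sim y$'' reduces to a finite system of linear congruences among the phase arguments modulo the character lattice; a generic $L$ then preserves injectivity fiberwise over the modulus data. Verifying that such an $L$ can be chosen once and for all, and that the resulting Lipschitz constant assembled across strata is independent of $S$, is the technical heart of the argument.
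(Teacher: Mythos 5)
The main gap is the linear dimension reduction. You correctly flag the construction of $L$ as the technical heart, but you leave it as a plan (``a generic $L$ then preserves injectivity fiberwise over the modulus data''), which is not an argument: a priori the admissible $L$'s could depend on the support stratum and on the modulus fiber, and the remark about linear congruences among phase arguments restates what has to be proved rather than proving it, since the projective/algebraic intersection count of \cite{prev} is indeed unavailable for the non-algebraic map $\Theta_F$. The paper avoids the problem entirely by never applying the generic-linear reduction to $\Theta_F$: the quantity fed into the linear map is
\[
\bigl(S(x_1),\ldots,S(x_N),\,F_{{diag}^c}(N(x_1),\ldots,N(x_N))\bigr)=F(\tilde x),\qquad \tilde x=(N(x_1),\ldots,N(x_N)),
\]
i.e.\ the original \emph{polynomial} separating map $F$ evaluated at the phase vector $\tilde x$. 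Consequently Theorem 3.1 of \cite{prev} applies verbatim: a generic linear $\ell:\mathbb{C}^{s}\to\mathbb{C}^{2N+1}$ keeps $\ell\circ F$ orbit-separating on all of $\mathbb{C}^{N}$, so $\ell(F(\tilde x))=\ell(F(\tilde y))$ forces $\tilde x\sim\tilde y$, and then $x\sim y$ using the stored moduli and the diagonality of the action. This precomposition trick (reduce dimension for the polynomial $F$, then plug in the phase vector) is exactly what your proposal is missing; without it, or a fully worked substitute, the existence of the $3N+1$-dimensional separating map is not established.

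Two secondary problems. First, the explicit $F$ you display, built only from $x_i^{m_i}$ and $x_i^{a_{ij}}x_j^{b_{ij}}$, is the cyclic-case family; for $\mathbb{Z}_n\times\mathbb{Z}_m$, and hence for general finite Abelian $G$, the paper must also include monomials in three variables (the Helly number of $\mathbb{Z}_n\times\mathbb{Z}_m$ is $3$), and Proposition \ref{sep} is devoted to showing that this enlarged family separates via the criterion of \cite{domokos}; you cannot quote the two-variable family as a black box for the general Abelian case. Second, in the stability step your claim that $L\circ\Theta_F$ is locally Lipschitz on a stratum with a constant uniform across strata is false: the phase $x_i/|x_i|$ has local Lipschitz constant of order $1/|x_i|$, which blows up near the stratum boundary. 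The paper's estimate works precisely because the prefactor $\mu(x)=\min_{x_i\neq 0}|x_i|$ is combined with the elementary inequality $\min\{|a|,|b|\}\,\bigl|\frac{a}{|a|}-\frac{b}{|b|}\bigr|\le 2|a-b|$ together with bounds on $F$ and $\nabla F$ over the torus, yielding the constant $3\norm{\ell}C+1$; your ``product rule'' sketch needs to be replaced by that computation.
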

The maps defined in the previous theorem do not have large powers on the moduli of the components of $x,$ but unfortunately they are not Lipschitz. However, in this paper we find a set of polynomial measurements that allows us to extend Theorem 3.7 in \cite{prev} which yields a low dimensional Lipschitz $G$-invariant which on the other hand does require the computation of high powers of entries.

To present our next result, let $F:{\mathbb{C}}^N \mapsto {\mathbb{C}}^M,$ be a given map and define $\Phi_F : {\mathbb{C}}^N \mapsto {\mathbb{C}}^M$ by
\begin{equation}\label{prevmap}
\Phi_F (x) := 
\left\{
\begin{matrix}
\| x \| F \left( \frac{x}{\| x \|} \right)& \mbox{ if } x \neq 0, \\
0 & \mbox{ if } x = 0
\end{matrix}
\right.
\end{equation}

\begin{theorem} \label{gral.abelian}
Let $G$ be a finite Abelian group acting on ${\mathbb{C}}^N.$ Then there is a polynomial map $F_G : {\mathbb{C}}^N \mapsto {\mathbb{C}}^{2N +1}$ such that if $x , y \in {\mathbb{C}}^N,$ are such that $\Phi_{F_G} (x) = \Phi_{F_G} (y),$ then $x \sim y.$ This map is Lipschitz. Moreover,there is an absolute constant $C$ such that
\[
\| \Phi_{F_G} (x) - \Phi_{F_G} (y) \| \leq C d_G ([x] ,[y]).
\]
\end{theorem}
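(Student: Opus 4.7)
The plan is to build $F_G$ so that it fits the framework of Theorem 3.7 in \cite{prev} and then read off the conclusion from that theorem. The construction proceeds in three stages: first a tensorial separating map in a large target space, then a linear dimension reduction that preserves injectivity on orbits, and finally the normalization $\Phi_{F_G}(x)=\|x\|F_G(x/\|x\|)$ which converts a homogeneous polynomial separator of orbits on the unit sphere into a Lipschitz separator on all of $\mathbb{C}^N$.

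First I would construct a separating tensorial map of the shape
\[
\tilde F(x)=\bigl((x_i^{m_i})_{i=1}^{N},\,(x_i^{a_{ij}}x_j^{b_{ij}})_{i\neq j}\bigr),
\]
living in $\mathbb{C}^{N(N+1)/2}$ after obvious repetitions are removed. The existence of exponents $m_i,a_{ij},b_{ij}$ so that $\tilde F(x)=\tilde F(y)$ iff $x\sim y$ is the content of the section referred to in the paper, which uses the characterization of separating invariants for finite Abelian group actions due to Domokos \cite{domokos}: any such action (after diagonalizing in a common unitary basis, since $G$ is Abelian and acts unitarily via \eqref{Gaction}) is separated by monomials, and one can pick them so the variables in each monomial come from at most two coordinates. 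This is the step I would want to write out carefully, since Domokos' criterion is stated in terms of semigroup generators of characters and one needs to package those generators into one- and two-variable monomials of the tensorial shape above. Verifying that the list produced is in fact separating on orbits (not merely generating the invariant ring) is the cleanest way to match the hypotheses of Theorem 3.7 of \cite{prev}.

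Second, I would verify the \emph{non-parallel property} of \cite{prev} for the explicit monomials constructed above. Recall that this amounts to showing that for the collection of exponent vectors defining $\tilde F$, no two projectivized gradient directions at a generic point are parallel; equivalently, the induced map to projective space is generically an embedding off a low dimensional subvariety. For the cyclic case this was checked by hand using the Dufresne monomials in \cite{prev}; here one checks it directly from the explicit two-variable structure, using that the exponents $(a_{ij},b_{ij})$ are uniquely determined mod the orders of the relevant characters. This is the main obstacle, and it is what the ``explicit construction using a result in \cite{domokos}'' alluded to in the introduction is meant to accomplish. Once non-parallel holds, Theorem 3.1 (or rather its proof) of \cite{prev} provides a linear map $L:\mathbb{C}^{N(N+1)/2}\to\mathbb{C}^{2N+1}$ such that $F_G:=L\circ \tilde F$ is still injective on orbits; the proof counts dimensions of the intersections of the projective varieties cut out by fibers of $\tilde F$ and shows that a generic choice of $L$ works.

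Third, I would verify the properties of $\Phi_{F_G}$. Homogeneity of the components of $\tilde F$ (they are monomials of fixed total degree, say $d$) combined with $L$ being linear shows that each component of $F_G$ is a homogeneous polynomial of the same degree $d$, so $F_G(x/\|x\|)$ depends only on the orbit of $x/\|x\|$ in the compact quotient $S^{2N-1}/G$, where $F_G$ is continuous and injective on orbits, hence bi-Lipschitz with respect to the quotient metric by compactness. Multiplying by $\|x\|$ then yields the bound $\|\Phi_{F_G}(x)-\Phi_{F_G}(y)\|\le C\,d_G([x],[y])$ by a triangle inequality argument, splitting the difference into $(\|x\|-\|y\|)\,F_G(y/\|y\|)$ and $\|x\|(F_G(x/\|x\|)-F_G(y/\|y\|))$ and using $|\,\|x\|-\|y\|\,|\le d_G([x],[y])$ together with the bi-Lipschitz estimate on the sphere, exactly as in Theorem 3.7 of \cite{prev}. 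Injectivity on orbits passes from $F_G$ on the sphere to $\Phi_{F_G}$ on $\mathbb{C}^N$ because the value $\|x\|=\|\Phi_{F_G}(x)\|^{1/d}$ up to a constant can be recovered from $\Phi_{F_G}(x)$, and then $F_G(x/\|x\|)$ can be recovered as well.
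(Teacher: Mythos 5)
There is a genuine gap at the first and most important step: you claim that for every finite Abelian group one can choose separating invariants of the shape $\bigl((x_i^{m_i})_i,(x_i^{a_{ij}}x_j^{b_{ij}})_{i\neq j}\bigr)$, i.e.\ monomials involving at most two coordinates. That is true for cyclic groups (this is the Dufresne-type list used in \cite{prev}), but it is false for general finite Abelian groups, and fixing this is precisely the content of the paper's Section 4: by Domokos' characterization the relevant ``Helly number'' for $\mathbb{Z}_n\times\mathbb{Z}_m$ (with $n,m$ not coprime) is $3$, so the separating tensor $F_{\mathbb{Z}_n\times\mathbb{Z}_m}$ in \eqref{domokosmap} must include the three-variable monomials \eqref{triple}, and Remark \ref{extension} extends this to products of more cyclic factors. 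A concrete counterexample to your claim: let $\mathbb{Z}_2\times\mathbb{Z}_2$ act diagonally on $\mathbb{C}^3$ by $(a,b)\cdot(x_1,x_2,x_3)=((-1)^a x_1,(-1)^b x_2,(-1)^{a+b}x_3)$. Every invariant monomial in at most two of the variables is a product of $x_k^2$'s, so such monomials cannot distinguish $(1,1,1)$ from $(1,1,-1)$, which lie in different orbits (they are separated by the triple invariant $x_1x_2x_3$). So your stage one produces a map that need not separate orbits, and the rest of the argument has nothing to stand on; the proof of Proposition \ref{sep}, which verifies via Domokos' Theorem 2.1 that the one-, two- and three-variable exponent vectors generate the full lattice of invariant exponents, is exactly the missing work.

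Two further points would also need repair even granting a separating $\tilde F$. First, you misstate the non-parallel property: in Definition \ref{npp} it is not a condition on gradient directions or on the projectivized image being an embedding, but the statement that $\|x\|=\|y\|=1$ and $F(x)=\lambda F(y)$ with $\lambda>0$ already force $x\sim y$; the paper proves this for $F_{\mathbb{Z}_n\times\mathbb{Z}_m}$ in Proposition \ref{nppF} by rescaling $y$ coordinatewise by powers of $\lambda$ and using strict monotonicity in $\lambda$ of $\sum_k\lambda^{2/m_k}|y_k|^2$, and this property (not compactness) is what makes the normalized map $\Phi_{F_G}$ injective on orbits. Second, your stage three assumes the monomials all have a common total degree $d$ so that $\|x\|$ can be recovered as $\|\Phi_{F_G}(x)\|^{1/d}$; they do not (the degrees $m_i$, $a_{ij}+b_{ij}$, etc.\ vary), so that recovery argument fails, and ``continuous and injective on a compact quotient'' does not give a bi-Lipschitz inverse in any case. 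The Lipschitz upper bound itself is obtained as you say (splitting $\|x\|F_G(x/\|x\|)-\|y\|F_G(y/\|y\|)$ into two terms), but injectivity must come from the non-parallel property combined with the generic linear reduction of Theorem \ref{prevdimred}, which is how the paper concludes, citing Proposition \ref{nppF}, Remark \ref{extension} and Theorem 3.7 of \cite{prev}.
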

Theorems \ref{mainthm} and \ref{gral.abelian} are particularly interesting in the case $G = \mathbb{Z}_n \times \mathbb{Z}_m,$ due to their relevance in image processing.

\section{A new $G$-invariant, Lipschitz almost everywhere, transform}

In this section we present some general results that yield the existence of a transform satisfying the conclusions of Theorem \ref{mainthm}, provided some algebraic hypothesis holds.
\begin{definition} \label{indexing}
Let $F:\mathbb{C}^{{N}} \to \mathbb{C}^k.$ We will say $F$ is a monomial tensor if each entry of $F(x)$ is a monomial on the entries of $x,$ and if there is an $0 \leq m \leq {N}$ such that for any subset $J \subset \{1, 2, \ldots {N}\}$ with $\vert J \vert \leq m,$ there is an entry in $F(x)$ which is a monomial on only $x_j,$ for $j \in J$ of the form
\[
\Pi_{j \in J} {x_j}^{\alpha_j^J},
\]
for some integer exponents $\alpha_j.$
\end{definition}

\subsection{New Complete Sets of Invariant Measurements}

Here we provide a new family of sets of separating functions. The separating functions are not polynomial but are obtained from polynomial ones. The advantage of the new sets is that they do not induce errors due to the presence of entries with large moduli. In the case of images, this corresponds to signals with large $L^{\infty}$ value of the Fourier transform.

Fix $F$ such that if 
\[
F(x) = F(y),
\]
then $x \sim y,$ and also satisfying the conditions of definition \ref{indexing}. For each $J \subset \{ 1, 2, \ldots {N} \},$ let $\alpha_j^J$ be the exponents such that
\[
F_J (x) = \Pi_{j \in J} {x_j}^{\alpha_j^{J}}.
\]
\begin{definition}\label{thetadef}
Let $\{ \beta_J \}_{J \subset \{1, \ldots ,{N}\}}$ be such that $\beta_J \geq 0,$ and $\beta_{\{ k \}} \geq 1$ for all $1 \leq k \leq {N}.$ Then if $\vert J \vert > 1,$ we define
\begin{equation} \label{thetaform1}
\Theta_J^{F,\beta}(x)=
\left\{
\begin{matrix}
\Pi_{j \in J} \vert x_j\vert^{\beta_j^J} \left(\frac{x_j}{\vert x_j\vert}\right)^{\alpha_j^J} & \mbox{ if } \Pi_{j \in J} \vert x_j\vert> 0,\\
&  \\
0 & \mbox{ otherwise,}
\end{matrix}
\right.
\end{equation} 
and if $\vert J \vert = 1$ we define
\begin{equation} \label{thetaform2}
\Theta_{\{ j \}}^{F,\beta}(x)=
\left\{
\begin{matrix}
 \vert x_j\vert^{\beta_j^{\{ j \}}} & \mbox{ if }\vert x_j\vert> 0,\\
&  \\
0 & \mbox{ otherwise,}
\end{matrix}
\right.
\end{equation}
\end{definition}
These new maps provide alternatives to our $F$. These new ones avoid computing large powers of the entries.

\begin{proposition}\label{propnewmaps}
Let $\{ \beta_J \}_{J \subset \{1, \ldots ,{N}\}}$ satisfy the conditions in definition \ref{thetadef}. Let $\Theta^{F,\beta}$ be defined by \eqref{thetaform1} and \eqref{thetaform2}. If $\Theta^{F,\beta} (x) = \Theta^{F,\beta} (y),$ then $x \sim y.$
\end{proposition}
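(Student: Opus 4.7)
The plan is to recover the equality $F(x)=F(y)$ from the equality $\Theta^{F,\beta}(x)=\Theta^{F,\beta}(y)$, and then invoke the hypothesis that $F$ separates orbits to conclude $x\sim y$. The proof splits into three stages: first use the singleton components to recover the moduli, then use the larger-$J$ components restricted to the common support to recover the phase information, and finally handle the indices outside the support by an automatic vanishing argument.

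First I would read off the $|J|=1$ equations. By \eqref{thetaform2}, for each $j$ we have $|x_j|^{\beta_j^{\{j\}}}=|y_j|^{\beta_j^{\{j\}}}$ when both are nonzero, and by the convention that the entry is $0$ when $|x_j|=0$, we get $x_j=0\iff y_j=0$. Since $\beta_j^{\{j\}}\geq 1$, extracting the $\beta_j^{\{j\}}$-th root of a nonnegative real gives $|x_j|=|y_j|$ for every $1\leq j\leq N$. In particular $x$ and $y$ have identical supports $S:=\{k:x_k\neq 0\}=\{k:y_k\neq 0\}$.

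Next I would fix $J$ with $|J|>1$ and split on whether $J\subset S$ or not. If $J\not\subset S$, then by the support agreement both products $\prod_{j\in J}x_j^{\alpha_j^J}$ and $\prod_{j\in J}y_j^{\alpha_j^J}$ contain a zero factor, so $F_J(x)=F_J(y)=0$ trivially. If instead $J\subset S$, then $\prod_{j\in J}|x_j|>0$, $\prod_{j\in J}|y_j|>0$, and \eqref{thetaform1} gives
\[
\prod_{j\in J}|x_j|^{\beta_j^J}\left(\tfrac{x_j}{|x_j|}\right)^{\alpha_j^J}
\;=\;\prod_{j\in J}|y_j|^{\beta_j^J}\left(\tfrac{y_j}{|y_j|}\right)^{\alpha_j^J}.
\]
Since $|x_j|=|y_j|>0$ for $j\in J$, the modulus factors cancel, leaving the identity of pure phase products. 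Multiplying both sides by the common positive real number $\prod_{j\in J}|x_j|^{\alpha_j^J}=\prod_{j\in J}|y_j|^{\alpha_j^J}$ and regrouping the factor $|x_j|^{\alpha_j^J}(x_j/|x_j|)^{\alpha_j^J}=x_j^{\alpha_j^J}$ entry by entry yields $\prod_{j\in J}x_j^{\alpha_j^J}=\prod_{j\in J}y_j^{\alpha_j^J}$, that is, $F_J(x)=F_J(y)$.

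Combining the two cases, $F_J(x)=F_J(y)$ for every $J$ with $|J|\leq m$ appearing as an entry in the monomial tensor $F$, and together with the singleton data this covers all the entries of $F$ guaranteed by Definition \ref{indexing}. Hence $F(x)=F(y)$, and the separating property of $F$ assumed at the start forces $x\sim y$, completing the proof. The only subtlety is the bookkeeping at the boundary $x_j=0$: the convention that $\Theta_J^{F,\beta}$ is set to $0$ whenever any $|x_j|$ in the product vanishes is exactly what makes the support-recovery step work cleanly in stage one, and is what makes the $J\not\subset S$ case of stage two automatic; no deeper obstacle appears beyond ensuring these conventions align with the monomial structure of $F$.
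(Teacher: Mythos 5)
Your proof follows the same architecture as the paper's own proof of this proposition: read moduli off the singleton components, read phase products off the multi-index components restricted to the common support, then assemble $F(x)=F(y)$ and invoke separation. Your support bookkeeping in the second stage is in fact cleaner than the paper's. However, there is a genuine gap in the final assembly step, and it is present in the paper's argument as well. The singleton components of $\Theta^{F,\beta}$ as defined in \eqref{thetaform2} store only $|x_j|^{\beta_j^{\{j\}}}$; equating them yields $|x_j|=|y_j|$ and nothing else, so you never obtain the singleton equalities $F_{\{j\}}(x)=x_j^{\alpha_j^{\{j\}}}=y_j^{\alpha_j^{\{j\}}}=F_{\{j\}}(y)$. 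The phrase ``together with the singleton data this covers all the entries of $F$'' and the conclusion $F(x)=F(y)$ therefore do not follow; the paper's ``Therefore ${x_j}^{\alpha_j^{\{J\}}} = {y_j}^{\alpha_j^{\{J\}}}$'' is the analogous unjustified leap.

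The gap is fatal for the statement with \eqref{thetaform2} as literally written. Take $N=2$, $G=\mathbb{Z}_2$ acting by $-I$, and $F(x)=(x_1^2,x_2^2,x_1x_2)$, which is a separating monomial tensor with $\alpha_1^{\{1\}}=\alpha_2^{\{2\}}=2$ and $\alpha_1^{\{1,2\}}=\alpha_2^{\{1,2\}}=1$. For $x=(1,1)$ and $y=(i,-i)$ we have $|x_j|=|y_j|=1$ and $N(x_1)N(x_2)=1=N(y_1)N(y_2)$, so $\Theta^{F,\beta}(x)=\Theta^{F,\beta}(y)$, yet $F(x)=(1,1,1)\neq(-1,-1,1)=F(y)$ and the orbits $\{\pm(1,1)\}$ and $\{\pm(i,-i)\}$ are disjoint. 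What the argument actually needs --- and what the family of measurements displayed in Section 2 of the paper (which attaches the phase factor $(x_i/|x_i|)^{m_i}$ to each singleton) indicates was intended --- is for the singleton entry of $\Theta^{F,\beta}$ to carry the phase as well:
\[
\Theta_{\{j\}}^{F,\beta}(x)=|x_j|^{\beta_j^{\{j\}}}\bigl(x_j/|x_j|\bigr)^{\alpha_j^{\{j\}}}\quad\text{for }x_j\neq 0.
\]
With that amendment, taking absolute values of the singleton equality recovers $|x_j|=|y_j|$, dividing recovers $x_j^{\alpha_j^{\{j\}}}=y_j^{\alpha_j^{\{j\}}}$, and the rest of your argument goes through exactly as you wrote it.
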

\begin{proof}
Suppose $\Theta^{F , \beta} (x) = \Theta^{F , \beta} (y).$ Then
\[
{\vert x_j \vert}^{\beta_j^{\{ j \}}} ={\vert y_j \vert}^{\beta_j^{\{ j \}}}
\]
whenever $\vert x_j \vert , \vert y_j \vert \neq 0,$ and so
\begin{equation} \label{eqnorm}
\vert x_j \vert = \vert y_j \vert , \mbox{ for any } 1 \leq j \leq {N}.
\end{equation}
Therefore 
\[
{x_j}^{\alpha_j^{\{ J \}}} = {y_j}^{\alpha_j^{\{ J \}}}. 
\]
Also, whenever $ \Pi_{j \in J} \vert x_j\vert> 0 ,  \Pi_{j \in J} \vert y_j\vert> 0 \neq 0,$ we have 
\[
\Pi_{j \in J} \vert x_j\vert^{\beta_j^J} \left(\frac{x_j}{\vert x_j\vert}\right)^{\alpha_j^J} = \Pi_{j \in J} \vert y_j\vert^{\beta_j^J} \left(\frac{y_j}{\vert y_j\vert}\right)^{\alpha_j^J}
\]
Then in this case we have
\[
\Pi_{j \in J} \left(\frac{x_j}{\vert x_j\vert}\right)^{\alpha_j^J} = \Pi_{j \in J} \left(\frac{y_j}{\vert y_j\vert}\right)^{\alpha_j^J}
\]
which implies 
\[
\Pi_{j \in J} {x_j}^{\alpha_j^J} = \Pi_{j \in J} {y_j}^{\alpha_j^J}.
\]
Together with \eqref{eqnorm}, we have that
\[
F(x) = F(y).
\]
Since 
\[
{x_k}^{m_k} = {y_k}^{m_k},
\]
therefore $x \sim y,$ by hypothesis. Also, it is easy to see that $\Theta^{F , \beta}$  is invariant. This completes the proof.
\end{proof}
\subsection{The Transform $\Phi$}

With this we can construct a new map that is better suited for applications.\\
To this end, we recall the following dimension reduction result from \cite{prev}.
\begin{theorem}[Theorem 3.1 \cite{prev}] \label{prevdimred}
Let $G$ act on $\mathbb{C}^{{N}}$ and suppose $P:\mathbb{C}^{{N}} \to \mathbb{C}^{{M}}$ is a polynomial $G-$invariant map such that the induced map $\tilde P : \mathbb{C}^{{N}} /G \to \mathbb{C}^{{M}}$ is injective. Then for $k \geq 2{N} +1,$ $\ell \circ \tilde P$ is injective for a generic linear map $\ell : \mathbb{C}^{{{N}}} \to \mathbb{C}^k.$
\end{theorem}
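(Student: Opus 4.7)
The plan is to recast the statement as a generic transversality question for linear subspaces against an algebraic subvariety of $\mathbb{C}^M$, built from the differences $P(x) - P(y)$ for inequivalent $x, y$.

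First I would reformulate. Define $\Delta : \mathbb{C}^N \times \mathbb{C}^N \to \mathbb{C}^M$ by $\Delta(x,y) := P(x) - P(y)$, and set $Z := \{(x,y) \in \mathbb{C}^N \times \mathbb{C}^N : x \not\sim y\}$. Since $\tilde{P}$ is injective on $\mathbb{C}^N / G$, one has $\Delta(Z) \subset \mathbb{C}^M \setminus \{0\}$, and $\ell \circ \tilde{P}$ is injective precisely when $\Delta(Z) \cap \ker(\ell) = \emptyset$. Letting $W \subset \mathbb{C}^M$ denote the Zariski closure of $\Delta(Z)$, and noting $0 \notin \Delta(Z)$, it suffices to prove that $W \cap \ker(\ell) \subseteq \{0\}$ for a generic $\ell$. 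Because $W$ is the Zariski closure of the image under a polynomial map of the complex $2N$-dimensional space $\mathbb{C}^N \times \mathbb{C}^N$, one has $\dim_{\mathbb{C}} W \leq 2N$.

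The next step is to pass to projective space. Let $\pi : \mathbb{C}^M \setminus \{0\} \to \mathbb{P}^{M-1}$ be the canonical projection and let $\tilde{W} \subset \mathbb{P}^{M-1}$ be the Zariski closure of $\pi(W \setminus \{0\})$. Because $P$ is not assumed homogeneous, $W$ need not be a cone through the origin, and the best general bound is $\dim \tilde{W} \leq \dim W \leq 2N$. Under this bound, $W \cap \ker(\ell) \subseteq \{0\}$ is equivalent to $\tilde{W} \cap \mathbb{P}(\ker \ell) = \emptyset$. To establish the genericity, I would set up the incidence variety
\[
I := \{ (p, \ell) \in \tilde{W} \times \mathrm{Hom}(\mathbb{C}^M, \mathbb{C}^k) : p \in \mathbb{P}(\ker \ell) \}
\]
and control the projection onto the second factor. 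The projection $I \to \tilde{W}$ has fibers of complex dimension $Mk - k$, since fixing a representative $v$ of $p$, the constraint $\ell(v) = 0$ imposes $k$ independent linear conditions on $\ell$. Hence $\dim I \leq 2N + Mk - k$, so the image of $I$ in $\mathrm{Hom}(\mathbb{C}^M, \mathbb{C}^k)$---exactly the ``bad'' locus of linear maps---has complex dimension at most $2N + Mk - k$. When $k \geq 2N + 1$, this is strictly less than $\dim \mathrm{Hom}(\mathbb{C}^M, \mathbb{C}^k) = Mk$, so the bad locus is contained in a proper Zariski closed subset and a generic $\ell$ has the required injectivity.

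The main obstacle I anticipate is the projectivization step. Because $W$ is not in general a cone through the origin, one cannot reduce $\dim \tilde{W}$ below $\dim W$, which is precisely why the threshold in the theorem is $k \geq 2N+1$ rather than the $k \geq 2N$ that would suffice if $P$ were homogeneous. A secondary technical point is ensuring that $\tilde{W}$ is genuinely closed in $\mathbb{P}^{M-1}$ and that the incidence variety $I$ is algebraic (or at least constructible), so that the standard dimension count applies cleanly; this should follow from Chevalley's theorem on images of constructible sets applied to the projections from $\mathbb{C}^N \times \mathbb{C}^N$ and from $I$.
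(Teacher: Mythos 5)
Your argument is correct, and it is the same circle of ideas the paper attributes to \cite{prev} (the theorem is only quoted here, not reproved: the paper describes the proof as ``counting dimensions of intersections of projective varieties''), but the mechanics differ from the closest argument actually written in this paper, namely the proof of the almost-everywhere result in the last section. There the authors homogenize $P$ itself to $\tilde P:\mathbb{C}^{N+1}\to\mathbb{C}^M$, so that $\mathrm{im}(\tilde P)$ is an honest projective variety of dimension $N+1$, and then intersect the fixed quasi-projective set of pairs $\{(X,Y):X,Y\in\mathrm{im}(\tilde P),\,X\neq Y\}$ with $\ker(\tilde\ell)$, $\tilde\ell(X,Y)=\ell(X-Y)$, for a generic $\ell$; the price of homogenizing is an extra variable, and in their formulation one only gets a generic (almost-everywhere) conclusion for $k>N+1$. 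You instead work directly with the closure $W$ of the difference set $\Delta(Z)=\{P(x)-P(y):x\not\sim y\}$, use injectivity of $\tilde P$ to see $0\notin\Delta(Z)$, projectivize $W$ without assuming it is a cone (hence only $\dim\tilde W\le 2N$, which is exactly why $k\ge 2N+1$ rather than $2N$ is needed), and run an incidence-variety count over the parameter space $\mathrm{Hom}(\mathbb{C}^M,\mathbb{C}^k)$ rather than intersecting a fixed variety with a generic kernel; both are standard generic-projection arguments and yield the full injectivity statement. Two small points: the claimed ``equivalence'' of $W\cap\ker\ell\subseteq\{0\}$ with $\tilde W\cap\mathbb{P}(\ker\ell)=\emptyset$ is really only an implication in the direction you use (the converse can fail because $\tilde W$ is a closure), which is harmless; and you correctly read the statement's $\ell:\mathbb{C}^N\to\mathbb{C}^k$ as a typo for $\ell:\mathbb{C}^M\to\mathbb{C}^k$.
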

For any $z \in \mathbb{C},$ define 
\[
N(z) = \left\{
\begin{matrix}
\frac{z}{\vert z\vert} & \mbox{ if } z \neq 0,\\
 0                   & \mbox{ otherwise,}
\end{matrix}
\right.
\]
and
\[
S(z) = \left\{
\begin{matrix}
1 & \mbox{ if } z \neq 0,\\
0 & \mbox{ otherwise. }
\end{matrix}
\right.
\]
We now define the following map.
\begin{definition}
Suppose we have an Abelian unitary group action $G$ acting on $\mathbb{C}^{{{N}}},$ and $F: \mathbb{C}^{{{N}}} \to \mathbb{C}^m$ a monomial tensor such that if $F(x) = F(y),$ then $x \sim y.$ Let $F_{{diag}^c}$ be the off diagonal part of $F,$ i.e.
\[
F_{{diag}^c} (x) = {\{ F^J (x) \}}_{\vert J \vert > 1}
\]
 Let $\ell :\mathbb{C}^n \to \mathbb{C}^{k}$ be a linear map. We define $\Phi_{\ell, F} (x)$ as:
\begin{eqnarray}\label{phimap}
\left\{
\begin{matrix}
(\vert x_1 \vert, \ldots , \vert x_{{N}} \vert, \left( \min_{1 \leq i \leq {{N}}, \vert x_i \vert \neq 0} \vert x_i \vert \right) \ell \left( S(x_1) , \ldots , S(x_{{N}}) , F_{{diag}^c} (N(x_1) , \ldots N(x_{{N}})) \right)), &&\\ 
\mbox{if } x \neq 0,&&\\
&&\\
0,\quad  \mbox{if } x = 0.&&\nonumber\\
\end{matrix}
\right.\\
\end{eqnarray}
\end{definition}
\begin{proposition}\label{sepphi}
Suppose $F$ is a monomial tensor such that if $F(x) = F(y),$ then $x \sim y.$ For a generic choice of $\ell : \mathbb{C}^{{N}} \to \mathbb{C}^{2{{N}} + 1}$ as in theorem \ref{prevdimred}, let $\Phi_{\ell , F}$ be defined as in \eqref{phimap}. If $\Phi_{\ell , F} (x) = \Phi_{\ell , F} (y),$ then $x \sim y.$
\end{proposition}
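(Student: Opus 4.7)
The plan is to first extract the moduli information from the equality $\Phi_{\ell,F}(x) = \Phi_{\ell,F}(y)$, then translate the resulting equation on phase data into a linear constraint on a genuinely polynomial $G$-invariant, and finally invoke the generic dimension reduction of Theorem \ref{prevdimred}.

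Comparing the first $N$ coordinates of $\Phi_{\ell,F}(x) = \Phi_{\ell,F}(y)$ gives $|x_i| = |y_i|$ for every $i$; in particular the supports of $x$ and $y$ coincide, $S(x) = S(y)$, and $c(x) := \min_{|x_i| \neq 0}|x_i| = c(y)$. If $x=0$ then $y=0$ and we are done, so assume $c(x)>0$ and cancel the common factor $c(x)$ from the last block of $2N+1$ coordinates to obtain
\[
\ell\bigl(S(x),\, F_{{diag}^c}(N(x))\bigr) \;=\; \ell\bigl(S(y),\, F_{{diag}^c}(N(y))\bigr),
\]
which by $S(x)=S(y)$ and linearity reduces to $\ell\bigl(0,\, F_{{diag}^c}(N(x)) - F_{{diag}^c}(N(y))\bigr) = 0$. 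Next, for each index set $J$ with $|J|>1$ contained in the common support, the monomial identity
\[
F^J(x) \;=\; \Bigl(\prod_{j \in J} |x_j|^{\alpha_j^J}\Bigr)\, F^J(N(x))
\]
together with the modulus equality converts $F_{{diag}^c}(N(x)) = F_{{diag}^c}(N(y))$ into the polynomial identity $F_{{diag}^c}(x) = F_{{diag}^c}(y)$ (entries indexed by $J \not\subseteq S$ vanish for both $x$ and $y$).

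The payoff is that the map $P(x) := \bigl(|x_1|^2, \ldots, |x_N|^2,\, F_{{diag}^c}(x)\bigr)$ is a polynomial (in $(x,\bar x)$) $G$-invariant which is orbit-separating, by the argument in the proof of Proposition \ref{propnewmaps}: the moduli together with the off-diagonal phase invariants determine the orbit. Applying Theorem \ref{prevdimred} (in its natural extension to maps polynomial in $(x,\bar x)$) to $P$ yields that for a generic linear map $\ell$ of target dimension $2N+1$, the composition $\ell \circ P$ is orbit-injective. Tracing back through the modulus factorization, this same generic $\ell$ forces $F_{{diag}^c}(x) = F_{{diag}^c}(y)$, whence $x \sim y$. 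A finite intersection of generic conditions over the $2^N$ possible supports produces a single generic $\ell$ that works in every case.

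The main obstacle is the non-polynomial nature of the inner map $x \mapsto (S(x),\, F_{{diag}^c}(N(x)))$ appearing in $\Phi_{\ell,F}$, since the dimension reduction of \cite{prev} was designed for polynomial maps. The modulus factorization above, together with the fact that Proposition \ref{propnewmaps} promotes off-diagonal phase data to a full orbit separator whenever the moduli are known, is exactly what converts the non-polynomial phase identity into a polynomial one, allowing Theorem \ref{prevdimred} to be invoked in its original polynomial setting.
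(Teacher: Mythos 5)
There is a genuine gap at the decisive step. After comparing the first $N$ coordinates and cancelling the common positive factor $\min_{|x_i|\neq 0}|x_i|=\min_{|y_i|\neq 0}|y_i|$, what you actually know is only that the difference $w:=v(x)-v(y)$ of the two vectors $v(x):=\bigl(S(x_1),\dots,S(x_N),F_{{diag}^c}(N(x_1),\dots,N(x_N))\bigr)$ lies in $\ker\ell$. From that point on you treat the full identity $F_{{diag}^c}(N(x))=F_{{diag}^c}(N(y))$ as available (``converts \dots into the polynomial identity $F_{{diag}^c}(x)=F_{{diag}^c}(y)$''), but that identity is precisely what remains to be proved: $\ell$ maps a high-dimensional space onto $\mathbb{C}^{2N+1}$, so $w\in\ker\ell$ does not give $w=0$. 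The attempted repair via $P(x)=(|x_1|^2,\dots,|x_N|^2,F_{{diag}^c}(x))$ does not close this gap, because orbit-injectivity of $\ell\circ P$ can only be invoked at a pair of points where you know the $\ell\circ P$ values agree, and for the pair $x,y$ you do not know $\ell(P(x))=\ell(P(y))$: writing $D$ for the diagonal scaling by the common moduli (entries $|x_i|^2$ and $\prod_{j\in J}|x_j|^{\alpha_j^J}$), one has $P(x)-P(y)=D\,w$, and $D$ need not preserve $\ker\ell$. So the ``tracing back through the modulus factorization'' is not a valid inference, and without it the chain ($\ell$-identity $\Rightarrow F_{{diag}^c}(x)=F_{{diag}^c}(y)\Rightarrow x\sim y$) collapses.

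The repair --- and this is the paper's argument --- is to apply the generic-$\ell$ orbit-injectivity to the phase vectors rather than to $x,y$: set $\tilde x=(N(x_1),\dots,N(x_N))$, $\tilde y=(N(y_1),\dots,N(y_N))$. The vector fed to $\ell$ in \eqref{phimap} is the separating monomial data evaluated at $\tilde x$ (the paper identifies it with $F(\tilde x)$; in your notation one has exactly $v(x)=P(\tilde x)$, since $|\tilde x_i|^2=S(x_i)$ and $N(\tilde x)=\tilde x$). Hence the cancelled identity reads $\ell(F(\tilde x))=\ell(F(\tilde y))$, the genericity of $\ell$ from Theorem \ref{prevdimred} applied to the complex monomial map $F$ gives $\tilde x\sim\tilde y$, and equal moduli together with unitarity of the action then yields $x\sim y$, with no need for the intermediate claim $F_{{diag}^c}(x)=F_{{diag}^c}(y)$. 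Note that staying with $F$ also avoids your unproved ``natural extension'' of Theorem \ref{prevdimred} to maps polynomial in $(x,\bar x)$: that theorem is stated for complex polynomial maps, and $|x_i|^2=x_i\bar x_i$ is not one, so your route would owe an additional (real/semialgebraic) dimension-counting argument that the paper never needs.
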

\begin{proof}
We will drop the subscripts on $\Phi$ in this proof. Note that if $x \neq 0,$ then $\Phi (x) \neq 0.$ Suppose $\Phi (x) = \Phi (y).$ If $\Phi (x) = \Phi (y) = 0,$ then $x = y = 0,$ and so $x \sim y.$ If $\Phi (x) = \Phi (y) \neq 0,$ Then
\[
\vert x_i \vert = \vert y_i \vert \mbox{ for all } 1\leq i \leq n,
\]
and
\begin{eqnarray}
&& \left( \min_{1 \leq i \leq {{N}} , \vert x_i \vert \neq 0} \vert x_j \vert \right) \ell \left( S(x_1) , \ldots , S(x_{{N}}) , F_{{diag}^c} (N(x_1) , \ldots N(x_{{N}})) \right) \nonumber \\
 &&=  \left( \min_{1 \leq i \leq {{N}} , \vert y_i \vert \neq 0} \vert y_j \vert \right) \ell \left( S(y_1) , \ldots , S(y_{{N}}) , F_{{diag}^c} (N(y_1) , \ldots N(y_{{N}})) \right). \nonumber
\end{eqnarray}
Let {
\begin{equation} \label{tildes}
\tilde{x} := (N(x_1), \ldots , N(x_N)), \tilde{y} := (N(y_1), \ldots , N(y_N)).
\end{equation}
}
Then
\[
\vert {\tilde{x}}_i \vert = S(x_i), \vert {\tilde{y}}_i \vert = S(y_i),
\]
and so
\[
( S(x_1) , \ldots , S(x_{{N}}) , F_{{diag}^c} (N(x_1) , \ldots N(x_{{N}})) ) = F (\tilde x),
\]
and
\[
( S(y_1) , \ldots , S(y_{{N}}) , F_{{diag}^c} (N(y_1) , \ldots N(y_{{N}})) ) = F (\tilde y).
\]
Since $F$ separates, we have that
\[
\tilde{x} \sim \tilde{y}.
\]
Since the group action is unitary, and the moduli of the components are the same, 
\begin{eqnarray}
x & = & (\vert x_1 \vert {\tilde x}_1, \ldots , \vert x_{{N}} \vert {\tilde x}_{{N}}) \nonumber \\
 & \sim & (\vert x_1 \vert {\tilde y}_1, \ldots , \vert x_{{N}} \vert {\tilde y}_{{N}}) \nonumber \\
 & = & (\vert y_1 \vert {\tilde y}_1, \ldots , \vert y_{{N}} \vert {\tilde y}_{{N}}) \nonumber \\
 & = & y,
\end{eqnarray}
so $x \sim y.$ This completes the proof.
\end{proof}

Let $x \in \mathbb{C}^n .$ We define the \textit{support} of $x$ as
\[
\mbox{supp } x = \{ i \in \{ 1 , \ldots , {{N}} \} : x_i \neq 0 \}.
\]
\begin{proposition}\label{lipphi}
Let $F : {\mathbb{C}}^N \mapsto {\mathbb{C}}^M$ be a monomial tensor such that if $F(x) = F(y),$ then $x \sim y.$ Let $\ell$ be chosen as in proposition \ref{sepphi}. Let ${\tilde{\Phi}}_{\ell , F}: \mathbb{C}^{{N}} / G \to \mathbb{C}^{3{{N}} + 1}$ be the induced map on the quotient space. If $x , y$ are such that $\mbox{supp } x = \mbox{supp } y = \{ 1 , \ldots , {{N}} \},$ then
\begin{equation} \label{lipbound}
\norm{{\tilde{\Phi}}_{\ell , F} (x) - {\tilde{\Phi}}_{\ell , F} (y)} \leq (3 \norm{\ell} C + 1) d_G ([x] , [y]),
\end{equation}
 where 
\[
C = \max \left\{ {\left( \sum_{i = 1}^M  \sup_{z \in \mathbb{S}^1 \times \cdots \times \mathbb{S}^1} {\norm{\nabla F_i (z)}}^2 \right)}^{\frac12} , \sup_{z \in \mathbb{S}^1 \times \cdots \times \mathbb{S}^1} \norm{F(z)} \right\}.
\]
If $\mbox{supp } x = \mbox{supp } y$ the conclusions of this proposition still hold, including \eqref{lipbound}.
\end{proposition}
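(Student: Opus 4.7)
The plan is to pick $g^{\ast} \in G$ achieving the infimum $d_G([x], [y]) = \norm{x - g^{\ast} y}$ (the inf is attained since $G$ is finite), set $y' := g^{\ast} y$, and use the $G$-invariance of $\Phi_{\ell,F}$ to reduce the claim to
\[
\norm{\Phi_{\ell, F}(x) - \Phi_{\ell, F}(y')} \leq (3\norm{\ell} C + 1)\, \norm{x - y'}.
\]
In a basis that simultaneously diagonalizes the finite abelian unitary action, each group element is a diagonal matrix with roots-of-unity entries, so $\abs{y'_i} = \abs{y_i}$ for every $i$ and $\mbox{supp } y' = \mbox{supp } y = \mbox{supp } x$; this is what allows a coordinate-by-coordinate comparison. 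I then split $\Phi_{\ell, F}(x) - \Phi_{\ell, F}(y')$ into its modulus block $(\abs{x_i} - \abs{y'_i})_{i=1}^{N}$ and its second block $A_x \ell(v_x) - A_{y'} \ell(v_{y'})$, where $A_x := \min_{i \in \mbox{supp } x} \abs{x_i}$ and $v_x := (S(x_1), \ldots, S(x_N), F_{{diag}^c}(N(x)))$. The modulus block contributes at most $\norm{x - y'}^2$ by the entrywise reverse triangle inequality.

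The main work is in the second block. By linearity of $\ell$, $\norm{A_x \ell(v_x) - A_{y'} \ell(v_{y'})} \leq \norm{\ell}\cdot\norm{A_x v_x - A_{y'} v_{y'}}$, and I would decompose $A_x v_x - A_{y'} v_{y'} = A_x (v_x - v_{y'}) + (A_x - A_{y'}) v_{y'}$. Because $S(x) = S(y')$, the first term is $A_x (0, F_{{diag}^c}(N(x)) - F_{{diag}^c}(N(y')))$; the restriction of $F_{{diag}^c}$ to $\mathbb{S}^1 \times \cdots \times \mathbb{S}^1$ has Jacobian of Frobenius norm at most $C$, so the mean value inequality gives $\norm{F_{{diag}^c}(N(x)) - F_{{diag}^c}(N(y'))} \leq C \norm{N(x) - N(y')}$. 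The elementary estimate $\abs{z/\abs{z} - w/\abs{w}} \leq 2\abs{z - w}/\abs{z}$, applied coordinatewise using $\abs{x_i} \geq A_x$ on the common support and noting that indices outside it contribute nothing to either side, yields $\norm{N(x) - N(y')} \leq 2\norm{x - y'}/A_x$. Hence $A_x \norm{v_x - v_{y'}} \leq 2C \norm{x - y'}$, the apparent $1/A_x$ blow-up being absorbed exactly by the prefactor $A_x$. For the remaining piece, the map $x \mapsto A_x$ is $1$-Lipschitz on the stratum of vectors with support equal to $\mbox{supp } x$, so $\abs{A_x - A_{y'}} \leq \norm{x - y'}$; and $\norm{v_{y'}} \leq \sup_{z \in \mathbb{S}^1 \times \cdots \times \mathbb{S}^1} \norm{F(z)} \leq C$, since the entries of $v_{y'}$ have the same moduli as the components of $F(N(y'))$. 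Summing, $\norm{A_x v_x - A_{y'} v_{y'}} \leq 3C\norm{x - y'}$, and applying $\ell$ gives $\norm{A_x\ell(v_x) - A_{y'}\ell(v_{y'})} \leq 3C\norm{\ell}\norm{x - y'}$.

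Combining the two blocks yields $\norm{\Phi(x) - \Phi(y')}^2 \leq (1 + 9 C^2 \norm{\ell}^2)\norm{x - y'}^2$, and the scalar inequality $\sqrt{1 + t^2} \leq 1 + t$ with $t = 3 C \norm{\ell}$ converts this into $\norm{\Phi(x) - \Phi(y')} \leq (3 C \norm{\ell} + 1)\norm{x - y'} = (3 C \norm{\ell} + 1)\, d_G([x], [y])$, as required. The general case $\mbox{supp } x = \mbox{supp } y \subsetneq \{1, \ldots, N\}$ needs no new idea: the entries of $v_x$ and $v_{y'}$ indexed by subsets meeting the common zero-set vanish on both sides, and the rest of the argument runs verbatim on the common support. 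The crux of the whole proof is the cancellation $A_x \cdot 2\norm{x-y'}/A_x = 2\norm{x - y'}$; the factor $1/A_x$ is forced by the discontinuity of the phase map $N$ at the origin, and absorbing it is precisely the reason for placing the coefficient $\min_i \abs{x_i}$ in front of $\ell$ in the definition of $\Phi$.
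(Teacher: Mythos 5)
Your proof is correct and follows essentially the same route as the paper: the same split into the modulus block and the scaled phase block, the same decomposition $A_x v_x - A_{y'}v_{y'} = A_x(v_x - v_{y'}) + (A_x - A_{y'})v_{y'}$, the same mean-value bound on the torus, and the same absorption of the $1/A_x$ blow-up by the prefactor $A_x$. The only (welcome) polish is that you use the sharper scalar estimate $\left|z/|z| - w/|w|\right| \leq 2|z-w|/|z|$ rather than the symmetrized $\min\{|z|,|w|\}$ version, which lets you avoid the ``WLOG $\min_i|x_i| \leq \min_i|y_i|$'' step that the paper needs, and you explicitly introduce $y' = g^{\ast}y$ at the outset (which the paper leaves implicit in its final line); combining the two blocks via Pythagoras and $\sqrt{1+t^2}\le 1+t$ rather than the triangle inequality is cosmetic and yields the same constant.
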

\begin{proof}
Let $x , y \in \mathbb{C}^{{N}}.$ Then if $\mbox{supp } x = \mbox{supp } y = \{ 1 , \ldots , {{N}} \} , $ we know that $x_i , y_i \neq 0$ for all $1 \leq i \leq N.$ Therefore 
\[
\min_{1 \leq i \leq {{N}}, \vert x_i \vert \neq 0} \vert x_i \vert = \min_{1 \leq i \leq {{N}}} \vert x_i \vert , \min_{1 \leq i \leq {{N}}, \vert y_i \vert \neq 0} \vert y_i \vert = \min_{1 \leq i \leq n} \vert y_i \vert.
\]
Recalling the definitions of $\tilde{x}$ and $\tilde{y}$ from \eqref{tildes}, we have
\begin{eqnarray}
\norm{\tilde{\Phi}_{\ell , F} (x)- \tilde{\Phi}_{\ell , F} (y)} & \leq & \norm{(\vert x_1 \vert - \vert y_1 \vert , \ldots , \vert x_{{N}} \vert - \vert y_{{N}} \vert )} \nonumber \\
 & + & \norm{\left( \min_{1 \leq i \leq {{N}}} \vert x_i \vert \right) \ell \circ F (\tilde{x}) - \left( \min_{1 \leq i \leq {{N}}} \vert y_i \vert \right) \ell \circ F (\tilde{y})} \nonumber\\
 & = & I + II.
\end{eqnarray}
Suppose without loss of generality that
\[
 \left( \min_{1 \leq i \leq {{N}}} \vert x_i \vert \right) \leq  \left( \min_{1 \leq i \leq {{N}}} \vert y_i \vert \right).
\]
Then note that
\begin{equation}
I \leq \norm{x - y},
\end{equation}
and that
\begin{eqnarray}
II & \leq & \norm{\left( \min_{1 \leq i \leq {{N}}} \vert x_i \vert \right) \ell \circ F (\tilde{x}) - \left( \min_{1 \leq i \leq {{N}}} \vert x_i \vert \right) \ell \circ F (\tilde{y})} \nonumber \\
 & & + \norm{\left( \min_{1 \leq i \leq {{N}}} \vert x_i \vert \right) \ell \circ F (\tilde{y}) - \left( \min_{1 \leq i \leq {{N}}} \vert y_i \vert \right) \ell \circ F (\tilde{y})} \nonumber \\
 & = & \left( \min_{1 \leq i \leq {{N}}} \vert x_i \vert \right) \norm{\ell \circ F (\tilde{x}) -  \ell \circ F (\tilde{y})} + \norm{\ell \circ F (\tilde{y})} \left| \left( \min_{1 \leq i \leq {{N}}} \vert x_i \vert \right) - \left( \min_{1 \leq i \leq {{N}}} \vert y_i \vert \right) \right| \nonumber \\
& = & \left( \min_{1 \leq i \leq {{N}}} \vert x_i \vert \right) \|\ell\| {\left( \sum_{i = 1}^M {\vert F_i (\tilde{x}) - F_i (\tilde{y}) \vert}^2 \right)}^{\frac12} + \norm{\ell \circ F (\tilde{y})} \left| \left( \min_{1 \leq i \leq {{N}}} \vert x_i \vert \right) - \left( \min_{1 \leq i \leq {{N}}} \vert y_i \vert \right) \right| \nonumber \\ 
& \leq & \left( \min_{1 \leq i \leq {{N}}} \vert x_i \vert \right) \norm{\ell} {\left( \sum_{i = 1}^M  \sup_{z \in \mathbb{S}^1 \times \cdots \times \mathbb{S}^1} {\norm{\nabla F_i (z)}}^2 {\norm{\tilde{x} - \tilde{y}}}^2 \right)}^{\frac12} \nonumber \\
 & & + \sup_{z \in \mathbb{S}^1 \times \cdots \times \mathbb{S}^1} \norm{\ell \circ F (z)} \left|  \left( \min_{1 \leq i \leq {{N}}} \vert x_i \vert \right) -  \left( \min_{1 \leq i \leq {{N}}} \vert y_i \vert \right) \right| \nonumber \\
 & = & \left( \min_{1 \leq i \leq {{N}}} \vert x_i \vert \right) \norm{\ell} \norm{\tilde{x} - \tilde{y}} {\left( \sum_{i = 1}^M  \sup_{z \in \mathbb{S}^1 \times \cdots \times \mathbb{S}^1} {\norm{\nabla F_i (z)}}^2 \right)}^{\frac12} \nonumber \\
& & + \sup_{z \in \mathbb{S}^1 \times \cdots \times \mathbb{S}^1} \norm{\ell \circ F (z)} \left|  \left( \min_{1 \leq i \leq {{N}}} \vert x_i \vert \right) -  \left( \min_{1 \leq i \leq {{N}}} \vert y_i \vert \right) \right| 
\end{eqnarray}
By the elementary inequality
\begin{equation}
\min \{ \vert x \vert , \vert y \vert \}\left| \frac{x}{\vert x \vert} - \frac{y}{\vert y \vert} \right| \leq 2 \vert x - y \vert,
\end{equation}
we obtain
\begin{eqnarray}
 \left( \min_{1 \leq i \leq {{N}}} \vert x_i \vert \right) \norm{\tilde{x} - \tilde{y}} & = & \left( \min_{1 \leq i \leq {{N}}} \vert x_i \vert \right) {\left( \sum_{i=1}^N \left| \frac{x_i}{\vert x_i \vert} - \frac{y_i}{\vert y_i \vert} \right| \right)}^{\frac12} \nonumber \\
 & = & {\left( \sum_{ i = 1}^{{N}} {\left( \min_{1 \leq i \leq {{N}}} \vert x_i \vert \right)}^2 {\left( \frac{ 2 \vert x_i - y_i \vert}{\min \{ \vert x_i \vert , \vert y_i \vert \}} \right)}^2 \right)}^{\frac12} \nonumber \\
 & \leq & 2 {\left( \sum_{i = 1}^N {\vert x_i - y_i \vert}^2 \right)}^{\frac12} = 2 \norm{x - y}
\end{eqnarray}
Also, if 
\[
 \left( \min_{1 \leq i \leq {{N}}} \vert x_i \vert \right) = \vert x_{j_0} \vert , \left( \min_{1 \leq i \leq {{N}}} \vert y_i \vert \right) = \vert y_{k_0} \vert , 
\]
then
\[
\left| \left( \min_{1 \leq i \leq {{N}}} \vert x_i \vert \right) -  \left( \min_{1 \leq i \leq {{N}}} \vert y_i \vert \right) \right| 
= \vert x_{j_0} \vert - \vert y_{k_0} \vert \leq \vert x_{k_0} \vert - \vert y_{k_0} \vert \leq \vert x_{k_0} - y_{k_0} \vert \leq {\norm{x - y}}.
\]
Hence
\begin{eqnarray}
II & \leq & 2 \norm{x - y} \norm{\ell} {\left( \sum_{i = 1}^M  \sup_{z \in \mathbb{S}^1 \times \cdots \times \mathbb{S}^1} {\norm{\nabla F_i (z)}}^2 \right)}^{\frac12} + \sup_{z \in \mathbb{S}^1 \times \cdots \times \mathbb{S}^1} \norm{\ell \circ F (z)} \norm{x - y} \nonumber\\
 & \leq & 2 \norm{x - y} \norm{\ell} {\left( \sum_{i = 1}^M  \sup_{z \in \mathbb{S}^1 \times \cdots \times \mathbb{S}^1} {\norm{\nabla F_i (z)}}^2 \right)}^{\frac12} + \norm{\ell} \sup_{z \in \mathbb{S}^1 \times \cdots \times \mathbb{S}^1} \norm{F(z)} \norm{x - y} \nonumber \\
 & \leq & 3 \norm{\ell} C \norm{x - y}
\end{eqnarray}
Therefore we have
\[
\norm{\tilde{\Phi}_{\ell , F} (x) - \tilde{\Phi}_{\ell , F} (y)} \leq I + II \leq \norm{{\tilde{\Phi}}_{\ell , F} (x) - {\tilde{\Phi}}_{\ell , F} (y)} \leq (3 \norm{\ell} C + 1) d_G ([x] , [y]).
\]
The case where instead we have
\[
\mbox{supp } x = \mbox{supp } y \neq \{ 1 , \ldots , N \}
\]
is similar.
\end{proof}
\noindent\textit{Proof of Theorem \ref{mainthm}.}
This is immediate from Proposition \ref{sepphi} and Proposition \ref{lipphi}.
\qed
\section{Monomial Maps for Finite Abelian Group Actions}
In the previous section, we constructed a transform $\Phi$ based on a monomial map $F.$ Here we observe that for each finite Abelian group, there is a way to construct a monomial tensor $F$ with explicit powers.\\
We illustrate this in the case $\mathbb{Z}_n \times \mathbb{Z}_m.$ The general case is analogous. The next proposition shows that a monomial tensor $F$ exists, and that the exponents can be found thanks to a result in \cite{domokos}.

\begin{definition}
Let $n,m$ not be coprime, and let $\mathbb{Z}_{{n}} \times \mathbb{Z}_m$ act on {$\mathbb{C}^{N}$}. We define the exponents {$m_{k}, a_{k_1 k_2}, b_{k_1 k_2}, c_{k_1 k_2 k_3}, d_{ k_1 k_2 k_3}, e_{ k_1 k_2 k_3}$} as follows: Let {$m_{k}$} be the smallest exponent such that
{
\begin{equation} \label{single}
{x_{k}}^{m_{k}}
\end{equation}
}is $\mathbb{Z}_n \times \mathbb{Z}_m - $invariant;
{$a_{k_1 k_2}$} minimal such that there is an exponent  {$b_{k_1 k_2}$} such that
{\begin{equation} \label{double}
x_{k_1}^{a_{k_1 k_2}} \,x_{k_2}^{b_{k_1 k_2}}
\end{equation}}
is invariant;
{$c_{k_1 k_2 k_3}$} minimal such that there are exponents {$d_{k_1 k_2 k_3}$} and {$e_{k_1 k_2 k_3}$} such that
\begin{equation} \label{triple}
{x_{k_1}^{c_{k_1 k_2 k_3}}, x_{k_2}^{d_{k_1 k_2 k_3}} \, x_{k_3}^{e_{k_1 k_2 k_3}}}
\end{equation}
is invariant. Now let $s$ be defined by
\begin{equation}\label{numofN}
s = {{N}} + {{N}} ({{N}} - 1)/2 + {{N}} ({{N}} - 1) ({{N}} - 2)/6.
\end{equation}
We define the map $F_{\mathbb{Z}_n \times \mathbb{Z}_m}: {\mathbb{C}^{N}} \to \mathbb{C}^s$ as the map whose components are the monomials in \eqref{single}, \eqref{double}, \eqref{triple}, i.e.
\begin{equation} \label{domokosmap}
F_{\mathbb{Z}_n \times \mathbb{Z}_m} (x) = \left( {{x_{k}}^{m_{k}} , x_{k_1}^{a_{k_1 k_2}} \,x_{k_2}^{b_{k_1 k_2}} , x_{k_1}^{c_{k_1 k_2 k_3}} x_{k_2}^{d_{k_1 k_2 k_3}} \, x_{k_3}^{e_{k_1 k_2 k_3}}} \right)
\end{equation}
\end{definition}

\begin{proposition} \label{sep}
Let $F_{\mathbb{Z}_{{n}} \times \mathbb{Z}_m}$ be defined as in \eqref{domokosmap}. If 
\[
F_{\mathbb{Z}_{{n}} \times \mathbb{Z}_m} (x) = F_{\mathbb{Z}_{{n}} \times \mathbb{Z}_m} (y),
\]
then $x \sim y.$
\end{proposition}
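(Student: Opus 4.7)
The plan is to combine three ingredients: a simultaneous diagonalization of the action, a result of \cite{domokos} guaranteeing that for rank-$2$ finite Abelian groups the monomial invariants supported on at most three coordinates form a separating set, and the elementary fact that invariant polynomials separate orbits for any finite linear group action. First, since $G = \mathbb{Z}_n \times \mathbb{Z}_m$ is Abelian and acts by commuting unitaries, the generators $U_1, U_2$ are simultaneously unitarily diagonalizable; in a basis of common eigenvectors each $g \in G$ acts by $\mathrm{diag}(\omega_1(g), \ldots, \omega_N(g))$ with $\omega_k$ a character of $G$. In this basis a monomial $x^\alpha = \prod_k x_k^{\alpha_k}$ is $G$-invariant precisely when $\alpha \in \mathbb{Z}^N$ satisfies the pair of congruence conditions $\sum_k \alpha_k \chi_k^{(1)} \equiv 0 \pmod{n}$ and $\sum_k \alpha_k \chi_k^{(2)} \equiv 0 \pmod{m}$, where $\chi_k^{(j)}$ denotes the weight of $\omega_k$ under the $j$th cyclic factor.

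Next, I would invoke the characterization of separating invariants from \cite{domokos}: for a finite Abelian group of rank $r$, the ring of polynomial invariants admits a separating set of monomials whose support has size at most $r+1$, and for each such support one may take a ``lex-minimal'' monomial---one whose first exponent is the smallest positive integer that can be completed to an invariant monomial on that support. In our rank-$2$ setting this yields exactly the three families \eqref{single}, \eqref{double}, \eqref{triple} built into $F_{\mathbb{Z}_n \times \mathbb{Z}_m}$. Consequently, if $F_{\mathbb{Z}_n \times \mathbb{Z}_m}(x) = F_{\mathbb{Z}_n \times \mathbb{Z}_m}(y)$, every separating monomial invariant takes the same value on $x$ and $y$, and hence $p(x) = p(y)$ for every $G$-invariant polynomial $p$, since monomial invariants span the invariant ring under the diagonal action. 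Finally, for any finite group acting linearly on $\mathbb{C}^N$ the invariant polynomials separate orbits: given $x \not\sim y$, Lagrange interpolation produces a polynomial $f$ with $f \equiv 1$ on $Gx$ and $f \equiv 0$ on $Gy$, and then $\frac{1}{|G|} \sum_{g \in G} f(g \cdot)$ is an invariant polynomial distinguishing $x$ from $y$, a contradiction. Therefore $x \sim y$.

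The hardest part is the middle step: verifying that the \emph{minimal} choices of exponents in \eqref{single}--\eqref{triple} suffice, rather than needing an exhaustive list of all invariant monomials on each support. This is where the structural input from \cite{domokos} is essential---it controls the semigroup of invariant exponent vectors in terms of a finite set of ``corners,'' and in rank $2$ these corners can always be realized on supports of size at most $3$. Once this is accepted, the remaining work is routine: translating between the abstract diagonal model and the unitary presentation \eqref{Gaction} of $G$, and applying the standard orbit-separation lemma for finite linear group actions.
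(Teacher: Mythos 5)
Your outline follows the same skeleton as the paper's argument (simultaneous diagonalization of the commuting unitaries, reduction to a statement about exponent vectors of invariant monomials, appeal to \cite{domokos} together with the fact that the relevant Helly number of $\mathbb{Z}_n\times\mathbb{Z}_m$ is $3$), but there is a genuine gap exactly at the step you yourself flag as the hardest. What Theorem 2.1 of \cite{domokos} supplies is a \emph{criterion}: a collection of invariant monomials is separating if and only if, for every support $J$ with $\vert J\vert\leq 3$, the exponent vectors of the monomials in the collection supported inside $J$ generate the full group of invariant exponent vectors supported in $J$, i.e.\ all integer solutions of the congruence system $\alpha_{k_1}c_1+\cdots+\alpha_{k_l}c_l\equiv 0 \pmod n$, $\beta_{k_1}c_1+\cdots+\beta_{k_l}c_l\equiv 0 \pmod m$. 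It does \emph{not} assert that the particular minimal-exponent (``lex-minimal'') choices in \eqref{single}, \eqref{double}, \eqref{triple} satisfy this criterion; that verification is precisely the content of Proposition \ref{sep}, and your proposal replaces it with the sentence ``once this is accepted, the remaining work is routine.'' The missing argument is the division-with-remainder reduction: for $\vert J\vert=2$, given a solution $(c_1,c_2)$ one subtracts a multiple of $(a_{k_1k_2},b_{k_1k_2})$, uses minimality of $a_{k_1k_2}$ to force the remainder in the first slot to vanish, and then uses minimality of $m_{k_2}$; for $\vert J\vert=3$ one must first reduce modulo the $m_k$'s, then modulo the triple exponent $c_{k_1k_2k_3}$, argue from its minimality that either the first slot becomes $0$ or some coordinate of the reduced vector vanishes, and in the latter case fall back to the two-variable case. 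None of this is automatic from the citation, and without it the claim that ``this yields exactly the three families'' is an assertion of the proposition rather than a proof of it.

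A secondary, smaller defect: from $F_{\mathbb{Z}_n\times\mathbb{Z}_m}(x)=F_{\mathbb{Z}_n\times\mathbb{Z}_m}(y)$ you infer ``$p(x)=p(y)$ for every $G$-invariant polynomial $p$, since monomial invariants span the invariant ring.'' Agreement on the finitely many monomials in $F$ does not by itself give agreement on \emph{all} invariant monomials; that implication is exactly what the separating property asserts, so as written the justification is circular. If you have genuinely established that the monomials in \eqref{domokosmap} form a separating set, you may conclude $x\sim y$ directly (for a finite group, invariants separate orbits, which is the content of your final Lagrange-interpolation/averaging remark), so that last step is fine --- but the heart of the proposition remains the lattice-generation verification that your proposal outsources.
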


\begin{proof}
Let $A , B$ be the matrices corresponding to the actions of $([1] , [0])$ and $([0] , [1]),$ i.e.
\[
A x = ([1] , [0]) x , B x = ([0] , [1]) x.
\]
Since these matrices commute, we know they can be diagonalized simultaneously, that is, there exists a set of coordinates $\{ x_{{{k}}} \}_{{{k}}}$ such that 
\[
A x_{{{k}}} = \chi_{{{k}}} (([1] , [0])) x_{{{k}}}
\]
and
\[
B x_{{{k}}} = \chi_{{{k}}} (([0] , [1])) x_{{{k}}},
\]
where $\{ \chi_{{{k}}} \}_{{{k}}}$ is a set of characters living in the dual group of $G,$ $\widehat{G}.$ Note that $\chi_{{{k}}} (g)$ are the eigenvalues of the action of $g,$ and therefore are unit complex numbers. Moreover, since $A^n = I = B^m,$ the eigenvalues of $A$ are $n-$th roots of unity, and the eigenvalues of $B$ are $m-$th roots of unity. Let 
\[
\chi_{{{k}}} (([1] , [0])) = e^{\alpha_{{{k}}} 2 \pi i /n}
\]
and
\[
\chi_{{{k}}} (([0] , [1])) = e^{\beta_{{{k}}} 2 \pi i /m}.
\]
Then $\chi_{{{k}}} ((a , b)) = e^{a \alpha_{{{k}}} 2 \pi i /n} e^{b \beta_{{{k}}} 2 \pi i /m}.$
{If $J=\{({{k}}_1),\ldots, ({{k}}_l)\}, l = 3,$ let 
\begin{eqnarray}
E_J&=&\{(0,\ldots, 0,c_{{{k}}_1 {{k}}_2 {{k}}_3},0,\ldots,0, d_{{{k}}_1 {{k}}_2 {{k}}_3}, 0,\ldots, 0, e_{{{k}}_1 {{k}}_2 {{k}}_3},0,\ldots,0), \nonumber\\
     &  & (0,\ldots,0,a_{{{k}}_1 {{k}}_2},0,\ldots, 0, b_{{{k}}_1 {{k}}_2},0,\ldots,0), (0,\ldots,0,a_{{{k}}_1{{k}}_3},0,\ldots, 0, b_{{{k}}_1 {{k}}_3},0,\ldots,0), \nonumber\\
 & &(0,\ldots,0,a_{{{k}}_2 {{k}}_3},0,\ldots, 0, b_{{{k}}_2 {{k}}_3},0,\ldots,0), (0,\ldots,0,m_{{{k}}_1},0,\ldots,0)\},\nonumber\\
 & & (0,\ldots,0,m_{{{k}}_2},0,\ldots,0) , (0,\ldots,0,m_{{{k}}_3},0,\ldots,0) \nonumber \\
\end{eqnarray}
if $l = 2,$ then let
\begin{equation}
E_J = \{ (0,\ldots,0,a_{{{k}}_1 {{k}}_2},0,\ldots, 0, b_{{{k}}_1 {{k}}_2},0,\ldots,0), (0,\ldots,0,m_{{{k}}_1},0,\ldots,0)\},
\end{equation}
if $l = 1,$ then let
\begin{equation}
E_J = \{ (0,\ldots,0,m_{{{k}}_1},0,\ldots,0)\},
\end{equation}
}where $c_{{{k}}_1 {{k}}_2 {{k}}_3}$ is on the ${{k}}_1-$spot, $d_{{{k}}_1 {{k}}_2 {{k}}_3}$ is on the ${{k}}_2-$spot, $e_{{{k}}_1 {{k}}_2 {{k}}_3}$ is on the ${{k}}_3-$spot, $a_{{{k}}_1 {{k}}_2}$ is on the $({{k}}_1)-$spot, $b_{{{k}}_1 {{k}}_2}$ on the ${{k}}_2-$spot, and where $m_{{{k}}}$ is on the ${{k}}-$spot
Then, using Theorem 2.1 from \cite{domokos} and the fact that the Helly number of $\mathbb{Z}_n \times \mathbb{Z}_m$ is $3$ (following the notation there), to prove that $F_{{\mathbb{Z}}_n \times {\mathbb{Z}}_m}$ we only need to verify $E_J$ generates
\[
\{(c_1,\ldots,c_l) :  \alpha_{{k_1}}c_1+\cdots + \alpha_{{{k_l}}}c_l\equiv 0 \mod n,\; \beta_{{k_1}}c_1+\cdots + \beta_{{k_l}} c_l\equiv 0 \mod m\},
\]
for $l \leq 3.$
If $l=1,$ we need
$(0,\ldots,0,m_{{{k}}},0,\ldots,0)$ to generate the set 
\begin{eqnarray}
\{ (0 , \ldots , 0 , c , 0 , \ldots , 0) : c \chi_{{{k}}} = 1 \} = \{ (0 , \ldots , 0 , c , 0 , \ldots , 0) : c = C m_{{{k}}} \},
\end{eqnarray}
where the $c$ is on the ${{k}}-$component, which is trivial.
If $l=2,$ we need
\begin{eqnarray} \label{card2}
&&(0,\ldots,0,m_{{{k}}_1},0,\ldots,0), (0,\ldots,0,m_{{{k}}_2},0,\ldots,0), \nonumber\\
&&(0,\ldots,0,a_{{{k}}_1 {{k}}_2},0,\ldots,0,b_{{{k}}_1 {{k}}_2},0,\ldots,0).
\end{eqnarray}
to generate the set
\begin{eqnarray}
&&\{ m \in \mathbb{Z}^s : \mbox{supp } m \subset \{{{k}}_1 ,{{k}}_2\}, \Pi {\chi_{{{k}}} (g)}^{m_{{{k}}}} = 1\} \nonumber \\
&=& \{ m \in \mathbb{Z}^s : \mbox{supp } m \subset \{ {{k}}_1 , {{k}}_2 \},\nonumber \\
&& \Pi_{{{k}} \in \{ {{k}}_1 , {{k}}_2 \}} {\left({e^{g_1 \alpha_{{{k}}} 2 \pi i /n} e^{g_2 \beta_{{{k}}} 2 \pi i /m}}\right)}^{m_{jk{{k}}}} = 1\} \nonumber \\
&=& \{ m \in \mathbb{Z}^s : \mbox{supp } m \subset \{ {{k}}_1  ,  {{k}}_2 \}, \nonumber \\
&&\Pi {e^{ m_{{{k}}} \alpha_{{{k}}} 2 \pi i /n}} = 1 = \Pi_{{{k}} \in \{ {{k}}_1 , {{k}}_2 \}} {e^{ m_{{{k}}} \beta_{{{k}}} 2 \pi i /m}}\}. \nonumber
\end{eqnarray}
Consider the system
\begin{equation}
\left\{
\begin{matrix} \label{congsys2}
\alpha_{{{k}}_1} c_{{{k}}_1} + \alpha_{{{k}}_2} c_{{{k}}_2} \equiv 0 \mod n\\
\beta_{{{k}}_1} c_{{{k}}_1} + \beta_{{{k}}_2} c_{{{k}}_2} \equiv 0 \mod m
\end{matrix}
\right.
\end{equation}
Then we need the tuples in \eqref{card2} to generate the set of $m \in \mathbb{Z}^s$ such that their support is in $\{ {{k}}_1 , {{k}}_2 \}$ and that satisfy \eqref{congsys2}. \\
Suppose $m$ satisfies \eqref{congsys2}. Consider $(c_1\mod a_{{{k}}_1 {{k}}_2}, c_2\mod {b}_{{{k}}_1 {{k}}_2}).$ Using the minimality of $a_{{{k}}_1 {{k}}_2},$ we can obtain that the exponent of $x_{{{k}}_2}$ is $0,$ and so it will be generated by the given monomials.
{Finally, if $l=3,$ let $J=\{k_1, k_2, k_3\}.$ Then we need 
\begin{eqnarray}\label{triplecase}
E_J&=&\{(0,\ldots, 0,c_{k_1 k_2 k_3},0,\ldots,0, d_{k_1 k_2 k_3}, 0,\ldots, 0, e_{k_1 k_2 k_3},0,\ldots,0), \nonumber\\
     &  & (0,\ldots,0,a_{k_1 k_2},0,\ldots, 0, b_{k_1 k_2},0,\ldots,0), (0,\ldots,0,a_{k_1 k_3},0,\ldots, 0, b_{k_1 k_3},0,\ldots,0), \nonumber\\
 & &(0,\ldots,0,a_{k_2 k_3},0,\ldots, 0, b_{k_2 k_3},0,\ldots,0), (0,\ldots,0,m_{k_1},0,\ldots,0)\},\nonumber\\
 & & (0,\ldots,0,m_{k_2},0,\ldots,0) , (0,\ldots,0,m_{k_3},0,\ldots,0) \nonumber \\
\end{eqnarray}
to generate 
\begin{eqnarray}
   &\{ v\in {\mathbb{Z}}^s : \mbox{supp } v \subset  \{ k_1 , k_2, k_3 \} , \Pi {\chi_{k} (g)}^{v_{k}} = 1\} \nonumber \\
= &\{ v\in {\mathbb{Z}}^s : \mbox{supp } v \subset  \{ k_1 , k_2 ,k_3 ) \} , \Pi e^{\alpha_{k} v_{k} 2 \pi i /n} e^{\beta_{k} v_{k} 2 \pi i / m} = 1\} \nonumber \\
= &\{ v\in {\mathbb{Z}}^s : \mbox{supp } v \subset \{ k_1 , k_2 , k_3  \}, \Pi e^{v_{k} \alpha_{k} 2\pi i /n} = 1 = \Pi e^{v_{k} \beta_{k} 2 \pi i /m} \}. \nonumber \\
\end{eqnarray}
Again consider the system
\begin{equation} \label{congsys3}
\left\{
\begin{matrix}
\alpha_{k_1} v_{k_1} + \alpha_{k_2} v_{k_2} + \alpha_{k_3} v_{k_3} \equiv 0 \mod n \\
\beta_{k_1} v_{k_1} + \beta_{k_2} v_{k_2} + \beta_{k_3} v_{k_3} \equiv 0 \mod m.
\end{matrix}
\right.
\end{equation}
Then we need the set $E_J$ in \eqref{triplecase} to generate 
\begin{equation} \label{congset}
\{ v \in {\mathbb{Z}}^s : \mbox{supp }v \subset \{ k_1 k_2 k_3  \} \mbox{ and } v \mbox{ satisfies the system } \eqref{congsys3}\}.
\end{equation}
Now suppose $v$ is in the set in \eqref{congset}. Then consider the vector 
\[
\tilde{v}^1 := (0, \ldots , 0 , v_{k_1} \mbox{mod } m_{k_1}, 0 , \ldots , 0 , v_{k_2} \mbox{mod } m_{k _2}, 0 , \ldots , 0 , v_{k_3} \mbox{mod } m_{k_3}, 0 , \ldots ,0).
\]
Then the new vector $\tilde{v}^1$ still satisfies the system \eqref{congsys3}. So ${\tilde{v}^1}_{k_1} < m_{k_1}, {\tilde{v}^1}_{k_2} < m_{k_2} ,$ and ${\tilde{v}^1}_{k_3} < m_{k_3}$. Note that
\[
\tilde{v}^1 - v
\]
is a linear combination of 
\[
(0 , \ldots , 0 , m_{k_1} , 0 \ldots , 0) , (0 , \ldots , 0 , m_{k_2} , 0 \ldots , 0) , (0 , \ldots , 0 , m_{k_3} , 0 \ldots , 0)
\]
 Now we split into two cases.
Case 1: $v_{k_1} , v_{k_2} , v_{k_3} \neq 0$
 Now, let us write
\[
\tilde{v}^1_{k_1} = q_{k_1 k_2} c_{k_1 k_2} + r_{k_1 k_2},
\]
where $r_{k_1 k_2}$ is the remainder when dividing $\tilde{v}^1_{k_1}$ by $c_{k_1 k_2}.$ Then define the new vector 
\[
\tilde{v}^2 := (0 , \ldots , 0 , r_{k_1 k_2} , 0 , \ldots , 0 , \tilde{v}^1_{k_2} - q_{k_1 k_2} d_{k_1 k_2} , 0 , \ldots , 0 , v^1_{k_3} - q_{k_1 k_2} e_{k_1 k_2} , 0 , \ldots , 0)
\]
Then $\tilde{v}^2_{k_1} < c_{k_1 k_2}.$ Now if $\tilde{v}^2_{k_2} , \tilde{v}^2_{k_3} >0,$ we must have that $\tilde{v}^2_{k_1} = 0,$ as otherwise we would be contradicting the minimality of $c_{k_1 k_2 k_3}.$ And so we reduce to: \\
Case 2:One of $v_{k_1} , v_{k_2} , v_{k_3}$ is $0$. In this case, if only one component is nonzero (say $\tilde{v}_{k} \neq 0$), we would have that $m_{k} \vert \tilde{v}_{k},$ so in this case it would be trivial. Now, if two components are nonzero, say $\tilde{v}^2_{k_1}$ and $\tilde{v}^2_{k_2}$ are nonzero, but $\tilde{v}^2_{k_3} = 0.$ Then we have that $(\tilde{v}^2_{k_1},\tilde{v}^2_{k_2})$ solves the system
\[
\left\{
\begin{matrix}
\alpha_{k_1} \tilde{v}_{k_1} + \alpha_{k_2} \tilde{v}_{k_2} \equiv 0 \mod n\\
\beta_{k_1} \tilde{v}_{k_1} + \beta_{k_2} \tilde{v}_{k_2} \equiv 0 \mod m
\end{matrix}
\right.
\]
Now dealing as in the case where $l = 2,$ we conclude that $\tilde{v}^2$ is in the span of $E_J,$ and therefore so is $v.$
}
\end{proof}

\begin{corollary}
For $G = \mathbb{Z}_n \times \mathbb{Z}_m,$ the Lipschitz constant in Theorem \ref{mainthm} can be taken to be
\[
C = 3 \sqrt{6} nm N^{\frac32} \norm{\ell} + 1
\]
\end{corollary}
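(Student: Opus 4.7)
The plan is to apply Proposition \ref{lipphi} to the explicit monomial tensor $F = F_{\mathbb{Z}_n \times \mathbb{Z}_m}$ constructed in \eqref{domokosmap} and proven separating in Proposition \ref{sep}. Once this is done, the claim reduces to bounding the constant
\[
C = \max\Bigl\{\bigl(\textstyle\sum_{i=1}^s \sup_{z \in \mathbb{S}^1 \times \cdots \times \mathbb{S}^1} \|\nabla F_i(z)\|^2\bigr)^{1/2},\ \sup_{z \in \mathbb{S}^1 \times \cdots \times \mathbb{S}^1} \|F(z)\|\Bigr\}
\]
by $\sqrt{6}\,nm\,N^{3/2}$; substituting into the Lipschitz bound $3\|\ell\|C + 1$ then yields the stated estimate.

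The first step is to bound every exponent appearing in $F_{\mathbb{Z}_n \times \mathbb{Z}_m}$ by $nm$. Since each character $\chi_k$ of $\mathbb{Z}_n \times \mathbb{Z}_m$ has order dividing $\operatorname{lcm}(n,m) \le nm$, and the exponents $m_k, a_{k_1 k_2}, b_{k_1 k_2}, c_{k_1 k_2 k_3}, d_{k_1 k_2 k_3}, e_{k_1 k_2 k_3}$ are the minimal ones forcing the associated product of characters to vanish, each is at most $nm$. The second step is the component count: by \eqref{numofN}, $s = N + \binom{N}{2} + \binom{N}{3}$, and a direct verification gives $s \le 2N^3$ for $N \ge 1$.

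The remaining estimates are then mechanical. On the torus $(\mathbb{S}^1)^N$ each monomial $F_i$ has modulus one, so $\|F(z)\|^2 = s \le 2N^3$. Each $F_i$ is a monomial in at most $3$ variables with exponents bounded by $nm$, so on $(\mathbb{S}^1)^N$ every nonzero partial derivative has modulus at most $nm$ and $\|\nabla F_i(z)\|^2 \le 3(nm)^2$. Summing gives $\sum_{i=1}^s \sup\|\nabla F_i\|^2 \le 3s(nm)^2 \le 6(nm)^2 N^3$, and using $nm \ge 1$ we obtain $C \le \sqrt{6}\,nm\,N^{3/2}$. Inserting into the Lipschitz constant from Proposition \ref{lipphi} produces $3\sqrt{6}\,nm\,N^{3/2}\|\ell\| + 1$ as desired.

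There is no real obstacle here: the algebraic content was fully absorbed into Proposition \ref{sep}, and the only quantitative inputs that matter are the character-order bound on the exponents and the crude combinatorial estimate $s \le 2N^3$. Everything else is routine arithmetic on the torus.
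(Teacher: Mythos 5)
Your proposal is correct and follows essentially the same route as the paper: invoke Proposition \ref{lipphi} for $F_{\mathbb{Z}_n \times \mathbb{Z}_m}$, bound all exponents by $nm$, use $s \leq 2N^3$ from \eqref{numofN}, estimate $\|\nabla F_i\|^2 \leq 3(nm)^2$ and $\|F(z)\| \leq s^{1/2}$ on the torus, and take the maximum to get $\sqrt{6}\,nm\,N^{3/2}$. Your added remark that the exponent bound comes from the character orders dividing $\operatorname{lcm}(n,m)$ is a slight elaboration of what the paper merely asserts, but the argument is otherwise the same.
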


\begin{proof}
Thanks to Proposition \ref{lipphi}, it suffices to bound
\[
{\left( \sum_{i = 1}^s  \sup_{z \in \mathbb{S}^1 \times \cdots \times \mathbb{S}^1} {\norm{\nabla F_{{\mathbb{Z}}_n \times {\mathbb{Z}}_m}^i (z)}}^2 \right)}^{\frac12}, \sup_{z \in {\mathbb{S}}^1 \times \cdots \times {\mathbb{S}}^1}  \norm{F_{{\mathbb{Z}}_n \times {\mathbb{Z}}_m} (z)}.
\]
To estimate the first term, we recall first the definition of $s$ in \eqref{numofN}, and note that $s \leq 2 N^3.$ Now, let us denote the $i-$th component of $F_{{\mathbb{Z}}_n \times {\mathbb{Z}}_m}$ by $F_{{\mathbb{Z}}_n \times {\mathbb{Z}}_m}^i.$ Now, note that since the components of $F_{{\mathbb{Z}}_n \times {\mathbb{Z}}_m}$ depend on at most three variables, $\nabla F_{{\mathbb{Z}}_n \times {\mathbb{Z}}_m}^i$ has at most three nonzero components. So if $z \in {\mathbb{S}}^1 \times \cdots \times {\mathbb{S}}^1$
\[
{\norm{\nabla F_{{\mathbb{Z}}_n \times {\mathbb{Z}}_m}^i}}^2 \leq 3 {\left(\max_{1 \leq k_1, k_2, k_3 \leq N} \{ m_{k_1} , a_{k_1 k_2} , b_{k_1 k_2} , c_{k_1 k_2 k_3} , d_{k_1 k_2 k_3} , e_{k_1 k_2 k_3} \}\right)}^2 \leq 3 {(nm)}^2,
\]
and so we have
\[
{\left( \sum_{i = 1}^s  \sup_{z \in \mathbb{S}^1 \times \cdots \times \mathbb{S}^1} {\norm{\nabla F_i (z)}}^2 \right)}^{\frac12} \leq {\left( s 3 {(nm)}^2 \right)}^{\frac12} = \sqrt{3} s^{\frac12} nm \leq \sqrt{6} nm N^{\frac32},
\]
To estimte the second term, if $z \in {\mathbb{S}}^1 \times \cdots \times {\mathbb{S}}^1,$ then each component of $F_{{\mathbb{Z}}_n \times {\mathbb{Z}}_m}$ has modulus at most $1.$ Therefore
\[
\norm{F_{{\mathbb{Z}}_n \times {\mathbb{Z}}_m}} \leq s^{\frac12} \leq \sqrt{2} N^{\frac32},
\]
which implies that
\[
\max \left\{ {\left( \sum_{i = 1}^M  \sup_{z \in \mathbb{S}^1 \times \cdots \times \mathbb{S}^1} {\norm{\nabla F_i (z)}}^2 \right)}^{\frac12} , \sup_{z \in {\mathbb{S}}^1 \times \cdots \times {\mathbb{S}}^1} \norm{F_{{\mathbb{Z}}_n \times {\mathbb{Z}}_m} (z)} \right\} \leq \sqrt{6} nm N^{\frac32}.
\]
Then using Proposition \ref{lipphi}, we get that the Lipschitz constant can be chosen to be
\[
C = 3 \sqrt{6} nm N^{\frac32} \norm{\ell} + 1.
\]
This completes the proof.
\end{proof}
\subsection{Non-Parallel Property for $F_{\mathbb{Z}_n \times \mathbb{Z}_m}$} \label{polynpp}
In our previous construction for the cyclic case in \cite{prev} {we used} the non-parallel property. Even though in this paper we find another way of constructing an invariant Lipschitz transform from $F,$ we think it may be of interest that the map $F_{\mathbb{Z}_N \times \mathbb{Z}_m}$ satisfies the conditions that yield the transform in \cite{prev}. We briefly recall this property.
\begin{definition} \label{npp}
Suppose $G$ acts on $\mathbb{C}^{{N}}$ and $F: \mathbb{C}^{{N}} \to \mathbb{C}^{{M}}$ is $G-$invariant. We say $F$ has the non-parallel property if the following holds: If $\|x\| = \|y\| = 1$ and $F(x) = \lambda F(y)$ for some $\lambda > 0,$ then $x = gy$ for some $g \in G.$
\end{definition}

The next proposition gives the non-parallel property in a case that was left open in \cite{prev}. As mentioned previously, what allows us to cover the general case of finite Abelian groups is that we are able to extract an explicit set of measurements from a characterization of \cite{domokos} which generalizes the separating invariants for cyclic actions in \cite{dufresne}. The proof of the following Proposition follows the same idea as the analogue of this in \cite{prev}, here however we need to be more careful because we have more compatibility conditions to verify.

\begin{proposition} \label{nppF}
Let $F_{\mathbb{Z}_{{n}} \times \mathbb{Z}_m}$ be defined as in \eqref{domokosmap}. Then $F_{\mathbb{Z}_{{n}} \times \mathbb{Z}_m}$ satisfies the non-parallel property.
\end{proposition}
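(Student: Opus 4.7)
The plan is to reduce to the case $\lambda = 1$, at which point the non-parallel property collapses into the separating statement already established in Proposition \ref{sep}. The reduction uses only the single-variable components of $F_{\mathbb{Z}_n \times \mathbb{Z}_m}$, namely the monomials $x_k^{m_k}$, together with the normalization $\|x\| = \|y\| = 1$; the pair and triple monomials (the extra compatibility conditions beyond the cyclic case treated in \cite{prev}) are invoked only at the very end through the appeal to Proposition \ref{sep}.

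Concretely, I would start by assuming $F_{\mathbb{Z}_n \times \mathbb{Z}_m}(x) = \lambda F_{\mathbb{Z}_n \times \mathbb{Z}_m}(y)$ for some $\lambda > 0$ with $\|x\|=\|y\|=1$. Comparing the diagonal entries yields $x_k^{m_k} = \lambda y_k^{m_k}$ for every $1 \leq k \leq N$; taking moduli and using $\lambda > 0$ gives $|x_k|^{m_k} = \lambda |y_k|^{m_k}$, and hence $|x_k|^2 = \lambda^{2/m_k}|y_k|^2$ whenever $y_k \neq 0$ (with $x_k = 0$ whenever $y_k = 0$). Summing over $k$ and invoking $\|x\|^2 = \|y\|^2 = 1$ then produces
\[
\sum_{k \,:\, y_k \neq 0} \bigl( \lambda^{2/m_k} - 1 \bigr) |y_k|^2 \;=\; 0.
\]
Because $\|y\|=1$ forces the index set above to be nonempty, and because each factor $\lambda^{2/m_k}-1$ has the same sign as $\lambda - 1$ (since $\lambda \mapsto \lambda^{2/m_k}$ is strictly increasing for $\lambda > 0$ and equals $1$ iff $\lambda=1$), this identity is possible only when $\lambda = 1$.

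Having forced $\lambda = 1$, I would close the proof by observing that $F_{\mathbb{Z}_n \times \mathbb{Z}_m}(x) = F_{\mathbb{Z}_n \times \mathbb{Z}_m}(y)$ and invoking Proposition \ref{sep} to supply $x \sim y$. The only new obstacle beyond the cyclic case of \cite{prev} is essentially cosmetic: Proposition \ref{sep} must now verify separation against the richer family of single, pair, and triple monomials arising from the Helly number $3$ characterization of \cite{domokos}, but that verification has already been carried out above. Thus, once $\lambda$ is pinned down by the diagonal monomials together with the norm constraint, no further input from the off-diagonal pieces of $F_{\mathbb{Z}_n \times \mathbb{Z}_m}$ is required to close out the non-parallel property itself.
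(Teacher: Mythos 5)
Your proof is correct, and it takes a genuinely more direct route than the paper's. The paper follows the template of \cite{prev}: from the diagonal equations it extracts $\lambda^{1/m_k}$ as the ratio of $x_k$ to $y_k$, then verifies the exponent-compatibility identities $\lambda^{a_{k_1k_2}/m_{k_1}+b_{k_1k_2}/m_{k_2}}=\lambda$ and its triple analogue so that the rescaled vector $\bar y=(\lambda^{1/m_k}y_k)_k$ satisfies $F(\bar y)=\lambda F(y)=F(x)$; separation then gives $x\sim\bar y$, unitarity of the action transfers the norm to $\bar y$, and the strict monotonicity of $\lambda\mapsto\sum_k\lambda^{2/m_k}|y_k|^2$ forces $\lambda=1$. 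You short-circuit the construction of $\bar y$ and the compatibility checks entirely: taking moduli in the diagonal equations $x_k^{m_k}=\lambda y_k^{m_k}$ gives $|x_k|^2=\lambda^{2/m_k}|y_k|^2$ (and matching supports), so the constraint $\|x\|=\|y\|=1$ alone yields $\sum_{y_k\neq 0}(\lambda^{2/m_k}-1)|y_k|^2=0$ and hence $\lambda=1$ by the same monotonicity observation; only then do you invoke Proposition \ref{sepphi}'s precursor, the separation statement of Proposition \ref{sep}, to conclude $x\sim y$. Both arguments ultimately rest on separation plus monotonicity in $\lambda$, but yours never uses the pair and triple monomials (or unitarity) in the reduction step, which makes it shorter and shows that the ``extra compatibility conditions'' the paper is careful about are not actually needed for the non-parallel property once separation is known; the paper's detour through $\bar y$ buys a closer structural parallel with the cyclic-case proof in \cite{prev} and records the compatibility identities explicitly, but is not logically necessary. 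One small point of care in your write-up: note that $x_k=0$ exactly when $y_k=0$ (which you state), so the index set in your sum is nonempty and the sign argument closes without exception.
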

\begin{proof}
Suppose that $\lambda \neq 0,$ and that 
\[
F_{\mathbb{Z}_n \times \mathbb{Z}_m} (x) = \lambda F_{\mathbb{Z}_n \times \mathbb{Z}_m} (y).
\]
Then 
\begin{equation} \label{siden}
x_{{k}}^{m_{{{k}}}} = \lambda {y_{{k}}}^{m_{{k}}},
\end{equation}
\begin{equation} \label{diden}
x_{{{k}}_1}^{a_{{{k}}_1 {{k}}_2}} \,x_{{{k}}_2}^{b_{{{k}}_1 {{k}}_2}} = \lambda y_{{{k}}_1}^{a_{{{k}}_1 {{k}}_2}} \,y_{{{k}}_2}^{b_{{{k}}_1 {{k}}_2}}
\end{equation}
and
\begin{equation}\label{tiden}
x_{{{k}}_1}^{c_{{{k}}_1 {{k}}_2 {{k}}_3}}, x_{{{k}}_2}^{d_{{{k}}_1 {{k}}_2 {{k}}_3}} \, x_{{{k}}_3}^{e_{{{k}}_1 {{k}}_2 {{k}}_3}} = \lambda y_{{{k}}_1}^{c_{{{k}}_1 {{k}}_2 {{k}}_3}}, y_{{{k}}_2}^{d_{{{k}}_1 {{k}}_2 {{k}}_3}} \, y_{{{k}}_3}^{e_{{{k}}_1 {{k}}_2 {{k}}_3}}.
\end{equation}
Now, looking at \eqref{siden}, we get that for ${{k}}$ such that $x_{{k}} , y_{{k}} \neq 0,$
\[
{\left(\frac{x_{{k}}}{y_{{k}}}\right)}^{m_{{k}}} = \lambda,
\]
and so 
\begin{equation} \label{identity}
{\left(\frac{x_{{k}}}{y_{{k}}}\right)}^{m_{{k}}} = {\lambda}^{1/ m_{{k}}}.
\end{equation}
Now using \eqref{diden}, we get 
\begin{equation}\label{qiden2}
{\left( \frac{x_{{{k}}_1}}{y_{{{k}}_1}} \right)}^{a_{{{k}}_1 {{k}}_2}} \,{\left(\frac{x_{{{k}}_2}}{y_{{{k}}_2}}\right)}^{b_{{{k}}_1 {{k}}_2}} = \lambda,
\end{equation}
and similarly for \eqref{tiden}, we get
\begin{equation} \label{qiden3}
{\left(\frac{x_{{{k}}_1}}{y_{{{k}}_1}}\right)}^{c_{{{k}}_1 {{k}}_2 {{k}}_3}} {\left(\frac{x_{{{k}}_2}}{y_{{{k}}_2}}\right)}^{d_{{{k}}_1 {{k}}_2 {{k}}_3}} \, {\left(\frac{x_{{{k}}_3}}{y_{{{k}}_3}}\right)}^{e_{{{k}}_1 {{k}}_2 {{k}}_3}} = \lambda.
\end{equation}
Plugging \eqref{identity} into \eqref{qiden2}, we get
\begin{equation}\label{identity2}
\lambda^{\frac{a_{{{k}}_1 {{k}}_2}}{m_{{{k}}_1}} + \frac{b_{{{k}}_1 {{k}}_2}}{m_{{{k}}_2}}} = {\left(\lambda^{1/m_{{{k}}_1}}\right)}^{a_{{{k}}_1 {{k}}_2}} \,{\left(\lambda^{1/m_{{{k}}_2}}\right)}^{b_{{{k}}_1 {{k}}_2}} = \lambda,
\end{equation}
and plugging \eqref{identity} into \eqref{qiden3}, we get
\begin{eqnarray}\label{identity3}
&&{\lambda}^{\frac{c_{{{k}}_1 {{k}}_2 {{k}}_3}}{m_{{{k}}_1}} + \frac{d_{{{k}}_1 {{k}}_2 {{k}}_3}}{m_{{{k}}_2}} + \frac{e_{{{k}}_1 {{k}}_2 {{k}}_3}}{m_{{{k}}_3}}}\nonumber\\
 && = {\left(\lambda^{1/m_{{{k}}_1}}\right)}^{c_{{{k}}_1 {{k}}_2 {{k}}_3}} {\left(\lambda^{1/m_{{{k}}_2}}\right)}^{d_{{{k}}_1 {{k}}_2 {{k}}_3}} \, {\left(\lambda^{1/m_{{{k}}_3 }}\right)}^{e_{{{k}}_1 {{k}}_2 {{k}}_3}} =  \lambda.
\end{eqnarray}
Now define
\[
\bar{y} = (\lambda^{m_{{k}}} y_{{k}})_{{k}}.
\]
Then
\[
{{\bar{y}}_{{k}}}^{m_{{k}}} = \lambda {y_{{k}}}^{m_{{k}}},
\] 
\[
{\bar{y}}_{{{k}}_1}^{a_{{{k}}_1 {{k}}_2}} \, {\bar{y}}_{{{k}}_2}^{b_{{{k}}_1 {{k}}_2}} = \lambda^{\frac{a_{{{k}}_1 {{k}}_2}}{m_{{{k}}_1}} + \frac{b_{{{k}}_1 {{k}}_2}}{m_{{{k}}_2}}} y_{{{k}}_1}^{a_{{{k}}_1 {{k}}_2}} \,y_{{{k}}_2}^{b_{{{k}}_1 {{k}}_2}} = \lambda y_{{{k}}_1}^{a_{{{k}}_1 {{k}}_2}} \,y_{{{k}}_2}^{b_{{{k}}_1 {{k}}_2}},
\]
by \eqref{identity2}, and
\[
{\bar{y}}_{{{k}}_1}^{c_{{{k}}_1 {{k}}_2 {{k}}_3}}, {\bar{y}}_{{{k}}_2}^{d_{{{k}}_1 {{k}}_2 {{k}}_3}} \, {\bar{y}}_{{{k}}_3}^{e_{{{k}}_1 {{k}}_2 {{k}}_3}} = \lambda y_{{{k}}_1}^{c_{{{k}}_1 {{k}}_2 {{k}}_3}} y_{{{k}}_2}^{d_{{{k}}_1 {{k}}_2 {{k}}_3}} \, y_{{{k}}_3}^{e_{{{k}}_1 {{k}}_2 {{k}}_3}}
\]
by \eqref{identity3}. Therefore
\[
F_{\mathbb{Z}_n \times \mathbb{Z}_m} (x) = \lambda F_{\mathbb{Z}_n \times \mathbb{Z}_m} (y) = F_{\mathbb{Z}_n \times \mathbb{Z}_m} (\bar{y}).
\]
Since we already know $F_{\mathbb{Z}_n \times \mathbb{Z}_m}$ separates, we know that
\[
x \sim \bar{y}.
\]
But we also know
\[
\sum_{{{1 \leq k \leq N}}} {y_{{k}}}^2 = {\norm{y}}^2 = {\norm{x}}^2 = {\norm{\bar{y}}}^2 = \sum_{{1 \leq k \leq N}} \lambda^{2/m_{{{k}}}} {y_{{{k}}}}^2.
\]
The last expression is increasing in $\lambda,$ and so $\lambda = 1,$ which implies $\bar{y} = y,$ which implies
\[
F(x) = F(\bar{y}) = F(y),
\]
thus $x \sim y.$
\end{proof}

\begin{remark} \label{extension}
This can be easily extended to the case where the group group $G$ is any finite Abelian group. Since $G$ will have a finite number of generators, $G$ will be isomorphic to some group of the form
\[
{\mathbb{Z}}_{n_1} \times \cdots \times {\mathbb{Z}}_{n_l},
\]
which is a simple generalization of our results here.
\end{remark}
\noindent \textit{proof of Theorem \ref{gral.abelian}.} Combining Proposition \ref{nppF} and Remark \ref{extension}, we can deduce Theorem \ref{gral.abelian} as in \cite{prev}.

\subsection{An Important Case}
There is a particularly interesting action of $\mathbb{Z}_n \times \mathbb{Z}_m,$ which is important in the study of image processing. Let $\mathbb{Z}_n \times \mathbb{Z}_m$ act on $\mathbb{C}^{nm}$ via cyclic permutations of the rows and columns when representing vectors in $\mathbb{C}^{nm}$ in matrix form:
\begin{equation} \label{permaction}
(i , j) x = (i , j) 
\begin{pmatrix}
x_{11} & \cdots & x_{1m} \\
\vdots & \ddots & \vdots \\
x_{n1} & \cdots & x_{nm}
\end{pmatrix}
=
\{ x_{k+i\mbox{ mod } n , l + j\mbox{ mod } m} \}_{kl}.
\end{equation}
Let
\[
\omega_n = e^{2 \pi i /n} ,\quad \omega_m = e^{2 \pi i /m}.
\]
Then following the notation from the proof of proposition \ref{sep}, in this case we have
\[
A = 
\begin{pmatrix}
\omega_n & 0                   & \cdots & 0        \\
0             &{\omega_n}^2 &           & \vdots \\
\vdots      &                      & \ddots &           \\
0             & \cdots            &            & 1        
\end{pmatrix}
,
B =
\begin{pmatrix}
\omega_m & 0                   & \cdots & 0        \\
0             &{\omega_m}^2 &           & \vdots \\
\vdots      &                      & \ddots &           \\
0             & \cdots            &            & 1        
\end{pmatrix}
.
\]
Since these martices are explicit, so will be the map $F_{{\mathbb{Z}}_n \times {\mathbb{Z}}_m}$ for this particular action of ${\mathbb{Z}}_n \times {\mathbb{Z}}_m.$ We can then write the map in Theorem \ref{mainthm} explicitly, and we can then get explicit Lipschitz bounds.
Let us recall the definition of $\Phi_{\ell ,F}$ in \eqref{phimap}.

\begin{corollary}
Let $\mathbb{Z}_n \times \mathbb{Z}_m$ act on $\mathbb{C}^{nm}$ via \eqref{permaction}. Then the map $\Phi_{\ell ,F}$ defined in \eqref{phimap} has the property that the induced map ${\tilde{\Phi}}_{\ell , F} : {\mathbb{C}}^{nm} / {\mathbb{Z}}_n \times {\mathbb{Z}}_m \mapsto {\mathbb{C}}^{3nm +1}$ is injective. Additionally, we have the Lipschitz bound
\[
\norm{{\tilde{\Phi}}_{\ell , F} ([x]) - {\tilde{\Phi}}_{\ell , F} ([y])} \leq \left( 3 \sqrt{6} {(nm)}^{\frac52} \norm{\ell} + 1 \right) d_G ([x] , [y]).
\]
\end{corollary}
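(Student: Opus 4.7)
The plan is to derive both statements as immediate specializations of results already established in the preceding sections. First I would observe that the permutation action \eqref{permaction} is unitary: in the original coordinates the generators are permutation matrices, and (as the explicit formulas for $A$ and $B$ show) after passing to the Fourier basis they become diagonal matrices with unit-modulus entries. Hence the hypothesis \eqref{Gaction} is satisfied with $N = nm$.

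Next, for the injectivity claim, I would apply Proposition \ref{sep} (together with Remark \ref{extension}) to obtain that the monomial tensor $F_{\mathbb{Z}_n\times\mathbb{Z}_m}$ constructed in \eqref{domokosmap} separates $\mathbb{Z}_n\times\mathbb{Z}_m$-orbits. Feeding this $F$ and a generic linear map $\ell:\mathbb{C}^{\bullet}\to\mathbb{C}^{2nm+1}$ into the construction \eqref{phimap}, Proposition \ref{sepphi} gives that $\Phi_{\ell,F}$ separates orbits, and thus the induced map $\tilde{\Phi}_{\ell,F}:\mathbb{C}^{nm}/(\mathbb{Z}_n\times\mathbb{Z}_m)\to\mathbb{C}^{3nm+1}$ is injective.

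For the Lipschitz estimate I would simply specialize the preceding corollary, which for any unitary action of $\mathbb{Z}_n\times\mathbb{Z}_m$ on $\mathbb{C}^N$ furnishes the Lipschitz constant $C = 3\sqrt{6}\,nm\,N^{3/2}\|\ell\| + 1$ valid on pairs of signals with the same support (as in Proposition \ref{lipphi}). Setting $N = nm$ in this bound gives
\[
C = 3\sqrt{6}\,nm\,(nm)^{3/2}\|\ell\| + 1 = 3\sqrt{6}(nm)^{5/2}\|\ell\| + 1,
\]
which is exactly the constant stated in the corollary.

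Because everything reduces to already-proven general statements, there is no genuine obstacle here; the only subtle point is that, as in Theorem \ref{mainthm}, the Lipschitz inequality must be read under the implicit condition $\operatorname{supp} x = \operatorname{supp} y$, so the stated bound is of the same generic nature as the one in the preceding corollary.
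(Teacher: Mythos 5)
Your proposal is correct and follows essentially the same route as the paper, which presents this corollary as an immediate specialization: the explicit diagonalization of the row/column permutation action shows it is unitary, Propositions \ref{sep} and \ref{sepphi} give injectivity of $\tilde{\Phi}_{\ell,F}$, and the preceding corollary's constant $3\sqrt{6}\,nm\,N^{3/2}\|\ell\|+1$ with $N=nm$ yields $3\sqrt{6}(nm)^{5/2}\|\ell\|+1$. Your remark that the bound is to be read under the equal-support condition of Proposition \ref{lipphi} is consistent with the paper's generic notion of stability.
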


\section{Other Transroms with Fewer Measurements}
In this section we explore other transforms which have fewer measurements, but whose induced maps are not injective everywhere, but almost everywhere. The first map is constructed using rational invariants, which therefore are not defined everywhere. Still, these maps are injective outside of the set of points with at least one null component, and outside the set of zeroes of a fixed second degree polynomial. The second map also induces an almost everywhere injective map, however the exact set where this map is not injective is not explicit.
\subsection{Rational Invariants}
The first transform of this section is going to be constructed using rational generating invariants from \cite{hubertlabahn}. Before writing the main theorem in this section, we must recall some notation from\cite{hubertlabahn}. For a pair of matrices $A ,P,$ where $A \in {\mathbb{Z}}^{s \times n}$ and $P \in {\mathbb{Z}}^{s \times s}$ is a diagonal matrix, let $V, H$ be the matrices needed in the process of turning
\[
[A \; -P]
\]
into its hermite form, where we decompose $V \in {\mathbb{Z}}^{(n+s) \times (n+s)}$ as
\[
V =
\begin{pmatrix}
V_i & V_n \\
P_i & P_n
\end{pmatrix}
,
\]
where $V_i \in {\mathbb{Z}}^{n \times s},$ $V_n \in {\mathbb{Z}}^{n \times n},$ $P_i \in {\mathbb{Z}}^{s \times s},$ and $P_n \in {\mathbb{Z}}^{s \times n}.$ We call $V$ the hermite multiplier.\\
Also, recall the following notation: for $\lambda \in {\mathbb{C}}^s,$ and a matrix $A \in {\mathbb{Z}}^{s \times n},$ let
\[
{\lambda}^A := [\lambda_1^{a_{11}} \cdots \lambda_s^{a_{s1}} ,
\lambda_1^{a_{12}} \cdots \lambda_s^{a_{s2}} , \ldots ,
\lambda_1^{a_{1n}} \cdots \lambda_s^{a_{sn}}]
\]
Now let $\xi_i$ be a $p_i -$th primitive root, where $p_i$ is the $i-$th diagonal entry in $P.$ Then let ${\mathcal{D}}_{A , P}$ be the image of
\[
(m_1 , \ldots , m_s) \to \mbox{diag } (({\xi_1}^{m_1} , \ldots , {\xi_s}^{m_s})^A).
\]
${\mathcal{D}}_{A , P}$ will be called the group of diagonal matrices given by the matrices $A$ and $P.$ Now we recall Theorem 3.5 from \cite{hubertlabahn}:
\begin{theorem} [Theorem 3.5, \cite{hubertlabahn}] \label{ratlinvgen}
Let $A , P$ be a pair of matrices where $A \in {\mathbb{Z}}^{s \times n}$ and $P \in {\mathbb{Z}}^{s \times s}$ is a diagonal matrix. Let
\[
V =
\begin{pmatrix}
V_i & V_n \\
p_i & p_n
\end{pmatrix}
\]
be the hermite multiplier. Then the $n$ components of $z^{V_n}$ form a minimal set of generating rational invariants.
\end{theorem}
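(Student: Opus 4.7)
The plan is to identify the $\mathcal{D}_{A,P}$-invariant Laurent monomials with a lattice $L \subset \Z^n$, read a $\Z$-basis of $L$ directly off the Hermite multiplier $V$, and then bootstrap from monomial invariants to rational invariants by an isotypic decomposition.

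First, for $v \in \Z^n$, the monomial $z^v$ transforms under $\mu = \mathrm{diag}(\lambda^A) \in \mathcal{D}_{A,P}$ by the character $\lambda \mapsto \prod_{j=1}^s \lambda_j^{(Av)_j}$. Since each $\lambda_j$ independently ranges over all $p_j$-th roots of unity as the group element varies, $z^v$ is invariant under every element of $\mathcal{D}_{A,P}$ if and only if $(Av)_j \equiv 0 \pmod{p_j}$ for each $j$, i.e.\ $Av \in P\Z^s$. Hence the exponents of invariant Laurent monomials form the lattice $L := \{v \in \Z^n : Av \in P\Z^s\}$.

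Next, the Hermite equation $[A \; -P]\, V = [H \; 0]$ with $V$ unimodular, restricted to the last $n$ columns, gives $A V_n = P p_n$, so each column of $V_n$ lies in $L$. Conversely, since $V \in \mathrm{GL}_{n+s}(\Z)$ and $H$ is non-singular, the block $\begin{pmatrix} V_n \\ p_n \end{pmatrix}$ is a $\Z$-basis of $\ker_{\Z}[A \; -P]$: any $u$ in this integer kernel can be written $u = Vc$ with $c \in \Z^{n+s}$, and $[H \; 0]c = 0$ forces the first $s$ components of $c$ to vanish. Projection onto the first $n$ coordinates is injective on $\ker_\Z[A\; -P]$ (any $v \in L$ determines $m = P^{-1}Av$ uniquely), so the columns $v_1, \ldots, v_n$ of $V_n$ form a $\Z$-basis of $L$, and consequently $z^{v_1}, \ldots, z^{v_n}$ multiplicatively generate the group of invariant Laurent monomials.

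To pass from monomials to rational invariants, write any invariant $f = P/Q \in \C(z)$ and replace it by $f = P \prod_{g \ne e}(g \cdot Q) \big/ \prod_{g}(g \cdot Q)$; the denominator is now an invariant polynomial, reducing invariance of $f$ to invariance of the numerator. Under the diagonal action the Laurent polynomial ring $\C[z^{\pm 1}]$ decomposes as the direct sum of its character isotypic pieces, and the trivial piece is exactly the $\C$-span of $\{z^v : v \in L\}$, which by the previous step is the $\C$-subalgebra generated by $z^{v_1}, \ldots, z^{v_n}$ (as Laurent monomials). Hence the field of rational invariants equals $\C(z^{v_1}, \ldots, z^{v_n})$. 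Minimality is then automatic: unimodularity of $V$ makes $v_1, \ldots, v_n$ $\Z$-linearly independent, so $z^{v_1}, \ldots, z^{v_n}$ are algebraically independent over $\C$; and since $\mathcal{D}_{A,P}$ is finite, the invariant field has transcendence degree $n$, so fewer than $n$ generators cannot suffice.

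The main obstacle I expect is in the monomial-to-rational step: the isotypic decomposition is very clean on Laurent polynomials but has to be imported to rational functions by the orbit-product trick, and one must also track sign conventions in the Hermite form carefully to ensure the columns of $V_n$ really span $L$ over $\Z$ rather than merely over $\Q$. Everything else reduces to elementary linear algebra over $\Z$ together with the standard unimodularity of the Hermite multiplier.
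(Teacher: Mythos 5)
The paper does not prove this theorem; it is quoted verbatim as Theorem~3.5 of Hubert and Labahn and used as a black box in the construction of the rational-invariant transform, so there is no in-paper argument to compare your attempt against. Your self-contained proof is, however, correct and follows what is essentially the standard argument for this kind of result. The identification of the invariant Laurent monomials with the lattice $L=\{v\in\Z^n : Av\in P\Z^s\}$ is exactly right, and the use of the Hermite equation $[A\ {-P}]\,V=[H\ 0]$, together with unimodularity of $V$ and nonsingularity of $H$ (which holds because $P$ is diagonal with nonzero entries, so $[A\ {-P}]$ has full row rank $s$), cleanly shows that $\begin{pmatrix}V_n\\P_n\end{pmatrix}$ is a $\Z$-basis of $\ker_\Z[A\ {-P}]$ and hence that the columns of $V_n$ are a $\Z$-basis of $L$. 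The orbit-product trick $f = (P\prod_{g\neq e}g\cdot Q)/\prod_g g\cdot Q$ is the right device for reducing rational invariance to polynomial invariance, and the grading of $\C[z_1^{\pm1},\ldots,z_n^{\pm1}]$ by monomials (each of which transforms by a character) makes the invariant subring exactly $\C[L]=\C[z^{\pm v_1},\ldots,z^{\pm v_n}]$, from which $\C(z)^{\mathcal D_{A,P}}=\C(z^{v_1},\ldots,z^{v_n})$ follows. Minimality via transcendence degree (finite group, so $\operatorname{trdeg}_\C\C(z)^G = n$, and $\Z$-linear independence of the $v_i$ gives algebraic independence of the $z^{v_i}$) closes the loop correctly.

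One small refinement worth making explicit: when you pass from ``invariant $f$ equals a ratio of invariant polynomials'' to ``$f\in\operatorname{Frac}(\C[L])$,'' you should note that invariant polynomials (not just Laurent polynomials) are $\C$-linear combinations of $z^v$ with $v\in L\cap\Z_{\geq 0}^n\subset L$, so both numerator and denominator already lie in $\C[L]$; this is the bridge between the polynomial orbit-product step and the Laurent-ring isotypic decomposition. Also, the hypothesis that $P$ has positive diagonal entries (orders of the generators) is needed both for finiteness of $\mathcal D_{A,P}$ and for the invertibility of $P$ used in the injectivity of the projection $\ker_\Z[A\ {-P}]\to L$; the paper's phrasing leaves this implicit.
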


We need the following lemma
\begin{lemma} \label{transfer}
Using the notation from the proof of Theorem \ref{sep}, the polynomials defined in \eqref{domokosmap} are invariant under the diagonal matrices given by the matrices
\[
P = 
\begin{pmatrix}
n & 0 \\
0 & m
\end{pmatrix}
\]
and
\[
A = 
\begin{pmatrix}
\alpha_1 & \cdots & \alpha_N \\
\beta_1 & \cdots & \beta_N
\end{pmatrix}
\]
\end{lemma}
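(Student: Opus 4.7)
The plan is to observe that the group $\mathcal{D}_{A,P}$ given by this particular choice of $A$ and $P$ \emph{coincides} with the image of $\mathbb{Z}_n\times\mathbb{Z}_m$ in the diagonalizing basis from the proof of Proposition \ref{sep}; once that identification is made, invariance of the monomials in \eqref{domokosmap} under $\mathcal{D}_{A,P}$ is nothing more than the defining invariance of those monomials under $\mathbb{Z}_n\times\mathbb{Z}_m$.

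First I would unpack the definition of $\mathcal{D}_{A,P}$: with $P=\operatorname{diag}(n,m)$ we take $\xi_1$ a primitive $n$-th root of unity and $\xi_2$ a primitive $m$-th root of unity, and for each $(m_1,m_2)\in\mathbb{Z}_n\times\mathbb{Z}_m$ the corresponding element of $\mathcal{D}_{A,P}$ is the diagonal matrix whose $k$-th entry is $\xi_1^{m_1\alpha_k}\xi_2^{m_2\beta_k}=e^{2\pi i\, m_1\alpha_k/n}\,e^{2\pi i\, m_2\beta_k/m}$. Reading off the proof of Proposition \ref{sep}, this is exactly $\chi_k((m_1,m_2))$, i.e.\ the eigenvalue by which the generator $(m_1,m_2)$ acts on the coordinate $x_k$ in the basis that simultaneously diagonalizes $A$ and $B$. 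Hence $\mathcal{D}_{A,P}$ is precisely the image of $\mathbb{Z}_n\times\mathbb{Z}_m$ under its unitary representation on $\mathbb{C}^N$ (rewritten in the diagonalizing basis).

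Next I would note that the exponents $m_k$, $(a_{k_1k_2},b_{k_1k_2})$ and $(c_{k_1k_2k_3},d_{k_1k_2k_3},e_{k_1k_2k_3})$ were chosen in the very definition above \eqref{domokosmap} so that each monomial
\[
x_k^{m_k},\quad x_{k_1}^{a_{k_1k_2}}x_{k_2}^{b_{k_1k_2}},\quad x_{k_1}^{c_{k_1k_2k_3}}x_{k_2}^{d_{k_1k_2k_3}}x_{k_3}^{e_{k_1k_2k_3}}
\]
is a $\mathbb{Z}_n\times\mathbb{Z}_m$-invariant. Concretely, for any $(m_1,m_2)\in\mathbb{Z}_n\times\mathbb{Z}_m$ the exponents satisfy the congruences
\[
m_k(m_1\alpha_k+\tfrac{n}{m}\,m_2\beta_k)\equiv 0,\quad a_{k_1k_2}\alpha_{k_1}+b_{k_1k_2}\alpha_{k_2}\equiv 0\pmod n,
\]
and analogously modulo $m$ and for the triple-indexed exponents. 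These congruences are exactly the statement that the relevant product of eigenvalues $\chi_k((m_1,m_2))^{(\cdot)}$ equals $1$.

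Combining the two observations finishes the proof: an arbitrary $D\in\mathcal{D}_{A,P}$ acts on $x$ as $(Dx)_k=\chi_k((m_1,m_2))\,x_k$ for some $(m_1,m_2)$, so each monomial in \eqref{domokosmap} evaluated at $Dx$ picks up a factor that is a product of the relevant characters raised to the chosen exponents, and by construction this factor is $1$. I do not expect any real obstacle; the only thing to be careful about is matching conventions between the matrix $A$ in the statement (whose rows encode the $\alpha_k,\beta_k$) and the exponential notation $\lambda^A$ from \cite{hubertlabahn}, so the computation $(\xi_1^{m_1},\xi_2^{m_2})^A$ really does reproduce the character values appearing in the proof of Proposition \ref{sep}.
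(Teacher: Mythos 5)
Your proposal is correct and matches the paper's own argument: both compute the $k$-th entry of $\bigl(\xi_1^{m_1},\xi_2^{m_2}\bigr)^A$, identify it with the character value $e^{m_1\alpha_k 2\pi i/n}e^{m_2\beta_k 2\pi i/m}$ by which $(m_1,m_2)$ acts, and conclude invariance from the defining property of the exponents in \eqref{domokosmap}. (Your illustrative display of the congruences contains a spurious $n/m$ factor and omits the separate mod-$m$ condition, but this aside is not load-bearing since the actual argument rests on the definitional invariance of the monomials.)
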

\begin{proof}
We know the polynomials defined in \eqref{domokosmap} are invariant under the action of the two matrices
\[
D_1 = 
\begin{pmatrix}
e^{\alpha_1 2 \pi i /n} & \cdots & 0 \\
                        \vdots & \ddots & \vdots \\
                               0 & \cdots &     e^{\alpha_N 2 \pi i/n}
\end{pmatrix}
\]
and
\[
D_2 = 
\begin{pmatrix}
e^{\beta_1 2 \pi i /m} & \cdots & 0 \\
                        \vdots & \ddots & \vdots \\
                               0 & \cdots &     e^{\beta_N 2 \pi i/m}
\end{pmatrix}
,
\]
and so they are invariant under any matrix of the form 
\[
\mbox{diag } (e^{m_1 \alpha_1 2 \pi i /n} e^{m_2 \beta_1 2 \pi i /m} , \ldots , e^{m_1 \alpha_N 2 \pi i /n} e^{m_2 \beta_N 2 \pi i /m}).
\]
Now let $F: \mathbb{Z}^2 \to \mathbb{C}^{N \ times N}$ be the map defined by
\[
F(m_1 , m_2) = \mbox{diag } \left( { \left( { \left( e^{ 2 \pi i/n} \right) }^{m_1} , { \left( e^{ 2 \pi i/m} \right) }^{m_2} \right) }^A \right).
\]
Then the $k-$th component of 
\[
\left( { \left( { \left( e^{ 2 \pi i/n} \right) }^{m_1} , { \left( e^{ 2 \pi i/m} \right) }^{m_2} \right) }^A \right),
\]
is
\begin{eqnarray}
{\left({\left( e^{ 2 \pi i/n} \right)}^{m_1} \right)}^{a_1k} {\left( { \left( e^{ 2 \pi i/m} \right) }^{m_2} \right)}^{a_2k} & = & {\left( e^{ 2 m_1\pi i/n} \right)}^{a_1i} {\left( e^{ 2 m_2 \pi i/m} \right)}^{a_2i} \nonumber \\
 & = &{\left( e^{ 2 m_1\pi i/n} \right)}^{\alpha_k} {\left( e^{ 2 m_2 \pi i/m} \right)}^{\beta_k} \nonumber \\
 & = & e^{ m_1 \alpha_k 2 \pi i/n} e^{ m_2 \beta_k 2 \pi i/m}.
\end{eqnarray}
Therefore the monomials in \eqref{domokosmap} are invariant under the group of diagonal matrices given by the matrices $A$ and $P.$
\end{proof}
\begin{proposition} \label{rational}
Let $V \in {\mathbb{C}}^{(N +2) \times (N + 2)}$ and $H \in {\mathbb{C}}^{2 \times 2}$ be such that
\[
\begin{pmatrix}
A & -P
\end{pmatrix}
V = 
\begin{pmatrix}
H & 0
\end{pmatrix}
\]
is in Hermite normal form. There are $N$ minimal generating rational invariants, given by the $N$ components of
\[
z^{V_n}.
\]
\end{proposition}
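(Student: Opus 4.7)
The plan is to reduce this statement directly to Theorem \ref{ratlinvgen} (Theorem 3.5 of \cite{hubertlabahn}) by identifying the $\mathbb{Z}_n \times \mathbb{Z}_m$ action considered here with the ``diagonal matrices given by $A$ and $P$'' in the sense of that reference, and then reading off the invariants from the Hermite multiplier.

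First I would invoke Lemma \ref{transfer}, which tells us exactly that the $\mathbb{Z}_n \times \mathbb{Z}_m$ action whose characters are encoded by the rows of $A$ (the $\alpha_k$ and $\beta_k$ from the simultaneous diagonalization in the proof of Proposition \ref{sep}) is the action of the diagonal group $\mathcal{D}_{A,P}$ with
\[
A = \begin{pmatrix} \alpha_1 & \cdots & \alpha_N \\ \beta_1 & \cdots & \beta_N \end{pmatrix}, \qquad P = \begin{pmatrix} n & 0 \\ 0 & m \end{pmatrix}.
\]
In particular, the rational invariants of the $\mathbb{Z}_n \times \mathbb{Z}_m$ action on $\mathbb{C}^N$ are exactly the rational invariants of $\mathcal{D}_{A,P}$.

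Next I would check the dimension bookkeeping. Here $s=2$ and the role of $n$ in \cite{hubertlabahn} is played by $N$, so $[A\; -P]$ is a $2 \times (N+2)$ integer matrix. Its Hermite normal form is $[H\; 0]$ with $H \in \mathbb{Z}^{2\times 2}$, and the associated Hermite multiplier $V \in \mathbb{Z}^{(N+2)\times(N+2)}$ decomposes as
\[
V = \begin{pmatrix} V_i & V_n \\ P_i & P_n \end{pmatrix}, \qquad V_i \in \mathbb{Z}^{N\times 2},\ V_n \in \mathbb{Z}^{N \times N},\ P_i \in \mathbb{Z}^{2\times 2},\ P_n \in \mathbb{Z}^{2\times N},
\]
so in particular $z^{V_n}$ has exactly $N$ components, matching the claim.

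The conclusion is then immediate from Theorem \ref{ratlinvgen}: a minimal set of generating rational invariants of $\mathcal{D}_{A,P}$ is given by the $N$ components of $z^{V_n}$, and by Lemma \ref{transfer} these are also a minimal generating set for the rational invariants of the original $\mathbb{Z}_n \times \mathbb{Z}_m$ action on $\mathbb{C}^N$. There is no real obstacle here; the only mildly delicate point is making sure the identification of the action in Lemma \ref{transfer} is applied with the correct orientation, so that the rows of $A$ correspond to the two generators of the group (giving $s=2$) and the columns to the $N$ eigen-coordinates (giving the ``$n$'' of \cite{hubertlabahn} equal to our $N$). Once this is in place, the result follows by direct citation.
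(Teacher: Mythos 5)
Your proposal is correct and follows essentially the same route as the paper: invoke Lemma \ref{transfer} to identify the $\mathbb{Z}_n\times\mathbb{Z}_m$ action with the diagonal group $\mathcal{D}_{A,P}$ determined by $A$ and $P$, and then cite Theorem \ref{ratlinvgen} to read off the $N$ components of $z^{V_n}$ as a minimal generating set of rational invariants. The extra dimension bookkeeping ($s=2$, the ``$n$'' of \cite{hubertlabahn} equal to $N$) that you spell out is implicit in the paper's proof but not a different argument.
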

\begin{proof}
Using Theorem \ref{transfer}, we have that the action of ${\mathbb{Z}}_n \times {\mathbb{Z}}_m$ on $\mathbb{C}$ can be represented as the set of diagonal matrices given by the matrices
\[
P =
\begin{pmatrix}
n & 0 \\
0 & m
\end{pmatrix}
\]
and
\[
A = 
\begin{pmatrix}
\alpha_1 & \cdots & \alpha_N \\
\beta_1 & \cdots & \beta_N
\end{pmatrix}
.
\]
Now using Theorem \ref{ratlinvgen}, we get the desired result.
\end{proof}
\subsection{The Hyperbolic Non-Parallel Property (HNPP)}
In Section \ref{polynpp} we verified the non-parallel property for $F_{{\mathbb{Z}}_n \times {\mathbb{Z}}_m},$ and we then then used this to define a Lipschitz map. ${(\cdot)}^{V_n}$ does not satisfy the non-parallel property, and so if we define in the same way
\[
g(x) := \| x \| {\left( \frac{x}{\|x\|} \right)}^{V_n},
\]
then if $g(x) = g(y),$ $x$ and $y$ need not be equivalent, as we will see at the end of this section.\\
However, we have an analogous property, (HNPP), for these rational invariants. To this end, we must introduce some notation. Let the vector $\{ c_k \}_{k_1}^N$ be the only solution to the system
\[
V_n 
\begin{pmatrix}
c_1 \\
\vdots \\
c_N
\end{pmatrix}
= 
\begin{pmatrix}
1 \\
\vdots \\
1
\end{pmatrix}
.
\]
We know this exists because $V_n$ is invertible by Corollary 2.5 from \cite {hubertlabahn}. Now define the matrix $H$ as
\[
H := \mbox{diag } (\mbox{sign } c_1 , \ldots , \mbox{sign } c_N),
\]
and define the quadratic form
\[
Q(x) = \langle Hx , x \rangle .
\]
\begin{proposition} \label{hnpp}
Suppose $x , y \in {\mathbb{C}}^N,$ $\lambda > 0$ satisfy that
\[
Q(x) = Q(y) , \; \lambda x^{V_n} = y^{V_n},
\]
then $x \sim y.$
\end{proposition}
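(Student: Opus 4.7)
The strategy mirrors the proof of Proposition~\ref{nppF}: show that the scalar $\lambda$ must equal $1$, after which the equation $\lambda x^{V_n} = y^{V_n}$ combined with the generating property of the rational invariants from Theorem~\ref{ratlinvgen} immediately gives $x \sim y$ on the locus where those rational invariants are defined. The quadratic form $Q$ is the tool that pins down $\lambda$, and the choice $H = \mathrm{diag}(\mathrm{sign}\, c_k)$ is precisely engineered to make every term of the resulting sum have the same sign.

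First I would take moduli in the equation $(y^{V_n})_j = \lambda\,(x^{V_n})_j$, which gives $\prod_i (|y_i|/|x_i|)^{(V_n)_{ij}} = \lambda$ for each $j$ on the generic set where all coordinates are nonzero. Setting $t_i := |y_i|/|x_i|$ and taking logarithms produces the linear system whose unique solution, thanks to the defining relation of $c$, is $\log t_i = c_i \log \lambda$, i.e.\ $|y_i| = \lambda^{c_i}|x_i|$ for every $i$. Plugging this into the definition of $Q$, the hypothesis $Q(x) = Q(y)$ becomes
$$
\sum_{k=1}^N \mathrm{sign}(c_k)\bigl(\lambda^{2 c_k} - 1\bigr)\,|x_k|^2 \;=\; 0.
$$
The crucial observation is that for every $k$ with $c_k \neq 0$ the coefficient $\mathrm{sign}(c_k)(\lambda^{2c_k}-1)$ has the same sign as $\lambda - 1$: if $c_k > 0$ both factors share the sign of $\lambda - 1$, while if $c_k < 0$ both factors flip, leaving the sign of the product unchanged. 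Consequently all summands have a common sign, so the sum vanishes only when $\lambda = 1$ (the degenerate case where every $|x_k|$ with $c_k \neq 0$ already vanishes reduces trivially using $\bar y_k := \lambda^{c_k} y_k$, which agrees with $y$ on the remaining coordinates).

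Once $\lambda = 1$, the hypothesis becomes $x^{V_n} = y^{V_n}$, and Theorem~\ref{ratlinvgen} together with Lemma~\ref{transfer} yields $x \sim y$ on the domain of definition of the rational invariants, which is exactly the almost-everywhere statement advertised in the subsection heading. The main obstacle is the careful bookkeeping of transpose and indexing conventions between the system $V_n c = \mathbf{1}$ defining $c$ and the multi-index exponent notation $z^{V_n}$ inherited from \cite{hubertlabahn}; a secondary subtlety is handling the points where one or several $x_k$ vanish, which is where the argument passing through $\log t_i$ breaks down and one must argue directly using the vanishing of individual factors in $z^{V_n}$.
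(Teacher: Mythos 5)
Your proof is correct, and it reaches the key identity by a somewhat different route than the paper. The paper's argument introduces the auxiliary vector $\tilde{y}=(\lambda^{c_1}y_1,\ldots,\lambda^{c_N}y_N)$, checks via $V_nc=\mathbf{1}$ that $\tilde{y}^{V_n}=\lambda y^{V_n}=x^{V_n}$, invokes the separating property to get $x\sim\tilde{y}$, and only then uses unitarity of the action to conclude $Q(x)=Q(\tilde{y})$, so that $Q(x)=Q(y)$ forces $\sum_k \mathrm{sign}(c_k)\,\lambda^{2c_k}|y_k|^2=\sum_k \mathrm{sign}(c_k)|y_k|^2$ and hence $\lambda=1$ by monotonicity of $\lambda\mapsto \mathrm{sign}(c)\lambda^{c}$. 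You instead extract the modulus relation $|y_i|=\lambda^{c_i}|x_i|$ directly from the hypothesis, by taking absolute values componentwise and solving the resulting linear system in logarithms using the invertibility of $V_n$ and $V_nc=\mathbf{1}$; your sign analysis of $\mathrm{sign}(c_k)(\lambda^{2c_k}-1)$ is then exactly the monotonicity step in disguise. What your version buys: the determination of $\lambda=1$ uses neither the separating property of $z^{V_n}$ nor unitarity of the group action (only invertibility of $V_n$), and it makes explicit where nonvanishing of the coordinates enters, which is consistent with the fact that this whole subsection lives on the locus where the rational invariants are defined. What the paper's version buys: it avoids logarithms and mirrors the rescaling trick already used for the non-parallel property in Proposition \ref{nppF}, so the two proofs read in parallel. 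Your handling of the degenerate case (all $|x_k|$ with $c_k\neq 0$ vanishing) and your closing step, $x^{V_n}=y^{V_n}$ plus Theorem \ref{ratlinvgen} and Lemma \ref{transfer} giving $x\sim y$, agree with the paper; just be consistent about whether rows or columns of $V_n$ carry the exponents, since the defining relation $V_nc=\mathbf{1}$ must be read in the same convention as $z^{V_n}$ (the paper uses the row convention in its proof).
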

\begin{proof}
For this proof we will drop the $n$ subscript on $V_n.$ Suppose that 
\[
x^V = \lambda y^V,
\]
and that $Q(x) = Q(y).$ Define
\begin{equation}
\tilde{y} = ({\lambda}^{c_1} y_1 , \ldots , {\lambda}^{c_N} y_N ).
\end{equation}
Then if $1 \leq k \leq N,$
\begin{eqnarray}
{\left( {\lambda}^{c_1} y_1 \right)}^{v_{k1}} \cdots {\left({ \lambda}^{c_N} y_N \right)}^{v_{kN}} & = & {\lambda}^{c_1 v_{k1}} \cdots {\lambda}^{c_N v_{kN}} {y_1}^{v_{k1}} \cdots {y_N}^{v_{kN}} \nonumber \\
                                                                      & = & {\lambda}^{c_1 v_{k1} + \cdots + c_N v_{kN}} {y_1}^{v_{k1}} \cdots {y_N}^{v_{kN}}.
\end{eqnarray}
But since 
\[
V
\begin{pmatrix}
c_1 \\
\vdots \\
c_N
\end{pmatrix}
= 
\begin{pmatrix}
1 \\
\vdots \\
1
\end{pmatrix}
,
\]
we have that
\begin{equation}
\sum_{j = 1}^N c_j v_{kj} = 1,
\end{equation}
for all $1 \leq k \leq N,$ and so
\[
{\tilde{y}_1}^{v_{k1}} \cdots {\tilde{y}_N}^{v_{kN}} = {\lambda}^{c_1 v_{k1} + \cdots + c_N v_{kN}} {y_1}^{v_{k1}} \cdots {y_N}^{v_{kN}} = \lambda {y_1}^{v_{k1}} \cdots {y_N}^{v_{kN}},
\]
which implies that ${\tilde{y}}^{V} = \lambda y_V.$ Therefore
\[
x^V = {\tilde{y}}^V.
\]
This means $x \sim \tilde{y}.$ Now since the action of ${\mathbb{Z}}_n \times {\mathbb{Z}}_m$ is unitary, $\|{\tilde{y}}_k\|= \|x_k\|,$ and so $Q(x) = Q(\tilde{y}).$ Since $Q(x) = Q(y),$
\[
\sum_{k = 1}^N \mbox{sign } c_k {y_k}^2 = Q(y) = Q(x) = Q(\tilde{y}) = \sum_{k = 1}^N \mbox{sign } c_k {{\lambda}^{2 c_k} y_k}^2 .
\]
and so
\[
\sum_{k = 1}^N \mbox{sign } c_k {{\lambda}^{2 c_k} y_k}^2 - \sum_{k = 1}^N \mbox{sign } c_k {y_k}^2 = 0.
\]
But the function $\lambda \mapsto \mbox{sign } c \lambda^c$ restricted to the positive real numbers is always strictly increasing. Therefore
\[
\sum_{k = 1}^N \mbox{sign } c_k {{\lambda}^{2 c_k} y_k}^2 - \sum_{k = 1}^N \mbox{sign } c_k {y_k}^2 = 0
\]
only when $\lambda = 1.$ So in fact
\[
x^V = y^V,
\]
which in turn implies $x \sim y.$
\end{proof}
\begin{definition}
In the case where there is an index $k$ such that $c_k <0,$ we define the map $\mathcal{G} : {\mathbb{C}}^N \mapsto \{ -1 , 1 \} \times {\mathbb{C}}^N$ by
\[
\mathcal{G}(x) := \left( \mbox{sign } Q(x) , \sqrt{\vert Q(x) \vert} {\left( \frac{x}{\sqrt{\vert Q(x) \vert}} \right)}^{V_n} \right).
\]
In the case that $c_k > 0$ for all $1 \leq k \leq n,$ we get $H = \mbox{Id},$ and so $Q(x) = \| x \|^2.$ Here, we define $\mathcal{G}(x)$ as
\[
\mathcal{G}(x) = \| x \| {\left( \frac{x}{\| x \|} \right)}^{V_n} .
\]
\end{definition}
\begin{theorem}
Suppose there is at least one index $1 \leq k \leq N$ such that $c_k < 0,$ and that we have $x , y \in \{ z \in {{\mathbb{C}}^N} : z_k \neq 0 \mbox{ for all } 1 \leq k \leq N \} / \{z : Q(z) = 0 \}$ such that $\mathcal{G}(x) = \mathcal{G}(y).$ Then $x \sim y.$ If instead we have $c_k \geq 0$ for all $1 \leq k \leq N,$ then we still have that $x \sim y.$
\end{theorem}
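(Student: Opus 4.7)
The plan is to reduce the hypothesis $\mathcal{G}(x) = \mathcal{G}(y)$ to the setting of Proposition \ref{hnpp} by a rescaling, then use the $G$-invariance of the rational invariants $z^{V_n}$ (Proposition \ref{rational}) to pin down the remaining scalar factor.

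First I would handle the case in which some $c_k<0$. Set $\alpha := \sqrt{|Q(x)|}$ and $\beta := \sqrt{|Q(y)|}$; both are strictly positive by the hypothesis $x,y \notin \{z : Q(z)=0\}$. Define the rescaled vectors $u := x/\alpha$ and $v := y/\beta$, so that $Q(u) = \operatorname{sign}(Q(x))$ and $Q(v) = \operatorname{sign}(Q(y))$. The first component of $\mathcal{G}(x) = \mathcal{G}(y)$ forces $\operatorname{sign}(Q(x)) = \operatorname{sign}(Q(y))$, whence $Q(u) = Q(v) \in \{-1,+1\}$. The second component becomes $\alpha\, u^{V_n} = \beta\, v^{V_n}$, i.e.\ $u^{V_n} = (\beta/\alpha)\, v^{V_n}$. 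Now Proposition \ref{hnpp}, applied with roles $v,u$ in place of $x,y$ and $\lambda = \beta/\alpha > 0$, yields $u \sim v$, so there is $g \in G$ with $u = gv$.

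To finish, I would exploit that each component of $z^{V_n}$ is a $G$-invariant rational function (Proposition \ref{rational}): applying $V_n$ to $u = gv$ gives $u^{V_n} = v^{V_n}$. Comparing with $u^{V_n} = (\beta/\alpha)\, v^{V_n}$, and using that every component $(v^{V_n})_k = \prod_j v_j^{v_{kj}}$ is nonzero (since the $y_j$, and hence the $v_j$, are nonzero by assumption), we conclude $\beta/\alpha = 1$, i.e.\ $\alpha = \beta$. Then
\[
x = \alpha u = \alpha g v = g(\beta v) = g y,
\]
so $x \sim y$, as required.

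The case $c_k \geq 0$ for every $k$ is the same argument with $H = \mathrm{Id}$ and $Q(z) = \|z\|^2$: here $\alpha,\beta$ are ordinary Euclidean norms and no sign bookkeeping is needed, so every step goes through verbatim. The only delicate point I expect is verifying that the normalizations $u,v$ remain within the regime where Proposition \ref{hnpp} applies and that $v^{V_n}$ has only nonzero entries so the final comparison forces $\beta/\alpha = 1$; both are guaranteed by the standing hypotheses that $x,y$ have no zero coordinates and $Q(x), Q(y) \neq 0$.
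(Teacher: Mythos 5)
Your proof is correct and follows the paper's strategy: normalize by $\sqrt{|Q(\cdot)|}$, use the sign entry of $\mathcal{G}$ to match $Q(u)=Q(v)$, and feed the normalized vectors into Proposition \ref{hnpp}. The one place you diverge is in closing the argument with $\sqrt{|Q(x)|}=\sqrt{|Q(y)|}$. The paper cites Proposition \ref{hnpp} for this equality as well, but the \emph{statement} of that proposition only asserts $x\sim y$; the fact that the scalar $\lambda$ must equal $1$ is established inside the proposition's proof, not its conclusion, so the paper's citation is informal. Your route — noting that $u\sim v$ together with the $G$-invariance of the rational map $z\mapsto z^{V_n}$ forces $u^{V_n}=v^{V_n}$, and then comparing with $u^{V_n}=(\beta/\alpha)v^{V_n}$ on a vector with all nonzero coordinates to force $\beta/\alpha=1$ — is a clean, self-contained repair of that small gap, and it correctly uses that the entries of $z^{V_n}$ are genuine rational invariants (Proposition \ref{rational}) together with the standing hypothesis that $x,y$ have no zero coordinates. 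The final identity $x=\alpha u=\alpha gv=g(\alpha v)=g(\beta v)=gy$ is right since the group acts linearly (by diagonal unitaries). The $c_k\geq 0$ case reduces, as you say, to $Q=\|\cdot\|^2$ with $H=\mathrm{Id}$ and no sign bookkeeping.
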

\begin{proof}
Suppose $\mathcal{G}(x) = \mathcal{G}(y).$ Then in particular
\[
\mbox{sign }Q(x) = \mbox{sign }Q(y),
\]
and also
\[
\sqrt{\vert Q(x) \vert} {\left( \frac{x}{\sqrt{\vert Q(x) \vert}} \right)}^{V_n} = \sqrt{\vert Q(y) \vert} {\left( \frac{y}{\sqrt{\vert Q(y) \vert}} \right)}^{V_n}.
\]
which implies
\[
\frac{\sqrt{\vert Q(x) \vert}}{\sqrt{\vert Q(y) \vert}} {\left( \frac{x}{\sqrt{\vert Q(x) \vert}} \right)}^{V_n} = {\left( \frac{y}{\sqrt{\vert Q(y) \vert}} \right)}^{V_n}.
\]
Now,
\begin{eqnarray}
Q \left( \frac{x}{\sqrt{\vert Q(x) \vert}} \right) & = & \sum_{k = 1}^N \mbox{sign } c_k {\left( \frac{x_k}{\sqrt{\vert Q(x) \vert}} \right)}^2 \nonumber \\
                                                                    & = & \sum_{k = 1}^N \mbox{sign } c_k \frac{{x_k}^2}{\vert Q(x) \vert} \nonumber \\
                                                                    & = & \frac{1}{\vert Q(x) \vert}\sum_{k = 1}^N \mbox{sign} (c_k) {x_k}^2 \nonumber \\
                                                                    & = & \frac{\mbox{sign }Q(x)}{Q(x)} Q(x) \nonumber \\
                                                                    & = & \mbox{sign }Q(x).
\end{eqnarray}
Therefore
\[
Q \left( \frac{x}{\sqrt{\vert Q(x) \vert}} \right) = \mbox{sign }Q(x) = \mbox{sign }Q(y) = Q \left( \frac{y}{\sqrt{\vert Q(y) \vert}} \right),
\]
so now using Proposition \ref{hnpp}, we get
\[
\frac{x}{\sqrt{\vert Q(x) \vert}} \sim \frac{y}{\sqrt{\vert Q(y) \vert}},
\]
and that
\[
\frac{\sqrt{\vert Q(x) \vert}}{\sqrt{\vert Q(y) \vert}} = 1,
\]
and then we have
\[
\sqrt{\vert Q(x) \vert} = \sqrt{\vert Q(y) \vert},
\]
hence
\[
x = \sqrt{\vert Q(x) \vert} \frac{x}{\sqrt{\vert Q(x) \vert}} \sim \sqrt{\vert Q(x) \vert} \frac{y}{\sqrt{\vert Q(y) \vert}} = \sqrt{\vert Q(y) \vert} \frac{y}{\sqrt{\vert Q(y) \vert}} = y,
\]
since the action of $\mathbb{Z}_n \times \mathbb{Z}_m$ only multiplies each component of a particular vector by a constant.\\
The case where $c_k \geq 0$ for all $1 \leq k \leq N$ is similar, with the exception that we do not need to have $\mbox{sign } Q(x) = \mbox{sign } Q(y),$ because in this case $Q(x) = \|x\|^2,$ and so this is trivial
\end{proof}
In general, $\{ c_k \}_{k = 1}^N$ has negative components. For example, looking at Example 3.12 of \cite{hubertlabahn}, if we consider the action of ${\mathbb{Z}}_N$ on ${\mathbb{C}}^N$ via powers of the matrix
\[
\mbox{diag}(\xi , \xi^2 , \ldots , \xi^{N - 1} , 1),
\]
where $\xi = e^{2 \pi i /N}$ (note this corresponds to the action of cyclic permutations of the coordinates of a vector in ${\mathbb{C}}^N$), then in this case, $V_n$ is given by
\[
V_n = 
\begin{pmatrix}
N & N-2 & \cdots & \cdots & 1 & 0 \\
0 & 1    & 0  & \cdots & \cdots & 0 \\
0 & 0 & 1 & 0 & \cdots & 0 \\
\vdots & \vdots & \ddots & \ddots & \ddots & \vdots \\
\vdots & \vdots & & \ddots & \ddots & 0 \\
0 & 0 & \cdots & \cdots & 0 & 1
\end{pmatrix}
.
\]
Recall that the vector $\{ c_k \}_{k = 1}^N$ is defined as
\[
(c_k)_{k = 1}^N = {V_n}^{-1} 
\begin{pmatrix}
1 \\
\vdots \\
1
\end{pmatrix}
.
\]
In this case ${V_n}^{-1}$ can be made explicit:
\[
{V_n}^{-1} = 
\begin{pmatrix}
1/N & -(N -2)/N & \cdots & \cdots & -1/N & 0 \\
0 & 1    & 0  & \cdots & \cdots & 0 \\
0 & 0 & 1 & 0 & \cdots & 0 \\
\vdots & \vdots & \ddots & \ddots & \ddots & \vdots \\
\vdots & \vdots & & \ddots & \ddots & 0 \\
0 & 0 & \cdots & \cdots & 0 & 1
\end{pmatrix}
.
\]
This implies that if $N \geq 4$
\[
C_1 = \frac{1}{N} - \frac{N - 2}{N} - \frac{N - 3}{N} - \cdots - \frac{1}{N} = \frac{1}{N} - \frac{(N-2)(N-1)}{N} = - \frac{N^2 - 3N + 1}{N} < 0.
\]
However, if $N = 3,$ then
\[
c_1 = 0, c_2 = 1 = c_3,
\]
hence $c_1 , c_2 , c_3 \geq 0,$ so in this case $Q(x) = \|x\|^2.$ In the end, both cases are possible: all the $c_k $'s may be nonnegative, or there may be some that are negative, in which case we need to add $\mbox{sign }Q(x)$ as a measurement.
Also, as we mentioned before $Q(x)$ is necessary here: if there is an index $k$ such that $c_k < 0$ and we try to use the map $g,$ which we recall was defined as
\[
g(x) = \| x \| {\left( \frac{x}{\| x \|} \right)}^{V_n}
\]
(as we did for the case where $c_k \geq 0$ for all $1 \leq k \leq N$) then $g$ will no longer separate, i.e., there exist $x , y \in \{ z : z_k \neq 0 \mbox{ for all } 1 \leq k \leq N\}$ such that $g(x) = g(y),$ but $x$ and $y$ are not equivalent. Indeed, let $y \in \{ z : z_k \neq 0 \mbox{ for all } 1 \leq k \leq N\}$ such that $\| y \| = 1$ and $Q(y) > 0.$ Indeed, first note that there is at least one index $k$ such that $c_k \neq 1.$ Now consider the equation
\begin{equation}
\sum_{k = 1}^N \lambda^{2 c_k} {y_k}^{2} = \sum_{k = 1}^N {y_k}^{2}.
\end{equation}
Using a change of variables of the form $\lambda = s^a$ for some integer $a,$ we can assume the $c_k '$s are all integers.
Note that since the function $x \mapsto x^c$ is convex for $x>0$ whenever $c \notin (0 , 1)$, we have that
\[
P(\lambda) := \lambda \mapsto \sum_{k = 1}^N \lambda^{2 c_k} {y_k}^{2} - \sum_{k = 1}^N {y_k}^{2}
\]
is convex. Together with the fact that
\[
\lim_{\lambda \to 0^+} P(\lambda) = \infty , \lim_{\lambda \to \infty} P(\lambda) = \infty ,
\]
this implies that unless the derivative of this function at $\lambda = 1$ is $0$ (which is impossible because $p'(1) = 0$ if and only if $Q(y) = 0$), there is another solution $\lambda_y , $ besides $\lambda = 1$ (if $p'(1) >0,$ then $1 > \lambda_y,$ meanwhile if $p'(1) < 0,$ then $1 < \lambda_y$). Now letting $x = \tilde{y}$ for $\lambda = \lambda_y , $ we have that
\[
\lambda_y y^{V_n} = {\tilde{y}}^{V_n},
\]
hence
\begin{equation}\label{part1}
g(\lambda_y y) = \| \lambda_y y \| {\left( \frac{\lambda_y y}{\| \lambda_y y \|} \right)}^{V_n} = \lambda_y \|y\| {\left( \frac{y}{\|y\|} \right)}^{V_n} = \lambda_y y^{V_n},
\end{equation}
since $\|y\| = 1.$ We have
\begin{equation}\label{part2}
\lambda_y y^{V_n} = {\tilde{y}}^{V_n} = \|\tilde{y}\| {\left( \frac{\tilde{y}}{\|\tilde{y}\|} \right)}^{V_n},
\end{equation}
since $\|\tilde{y}\| = \|y\| = 1$ (comes from the definition of $\lambda_y$). Combining \eqref{part1} and \eqref{part2}, we get
\[
g(\lambda_y y) = \lambda_y y^{V_n} = \|\tilde{y}\| {\left( \frac{\tilde{y}}{\|\tilde{y}\|} \right)}^{V_n} = g(\tilde{y}).
\]
In other words, $g(\lambda_y y) = g(\tilde{y}).$ However, $\lambda_y y $ and $\tilde{y}$ are not equivalent, as otherwise we would have in particular that all the components have the same moduli, so for all $1 \leq k \leq N,$ we have
\[
\vert {\lambda_y}^{c_k} y_k \vert = \lambda_y \vert y_k \vert
\]
which implies that $c_k = 1$ for all $1 \leq k \leq N,$ which is impossible.

\subsection{Almost everywhere separating}

Let $G$ act on $\mathbb{C}^n$. We say a map $F:\mathbb{C}^n\rightarrow\mathbb{C}^N$ is almost everywhere separating if there is an open and dense set $U\subseteq\mathbb{C}^n$ such that for all $x,y\in U$, $F(x)=F(y)$ if and only if $x= gy$ for some $g\in G$.

\begin{theorem}
Let $G$ be a finite group acting on $\mathbb{C}^n$ and let $P:\mathbb{C}^n\rightarrow\mathbb{C}^N$ be a polynomial separating map. Then for $k>n+1$, $\ell\circ P$ is almost everywhere separating for a generic linear map $\ell:\mathbb{C}^N\rightarrow\mathbb{C}^k$.
\end{theorem}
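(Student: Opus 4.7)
My plan mirrors the proof of Theorem~\ref{prevdimred}, but relaxes the dimension bound from $2n+1$ to $n+2$ by requiring injectivity only on an open dense set rather than everywhere. First I observe that, since $P$ is separating and $G$ is finite, every non-empty fiber of $P$ equals a finite $G$-orbit. Hence $P$ has generically $0$-dimensional fibers, the Zariski closure $Y := \overline{P(\mathbb{C}^n)}\subseteq\mathbb{C}^N$ is irreducible of dimension exactly $n$, and preimages under $P$ of proper Zariski closed subsets of $Y$ are again proper Zariski closed subsets of $\mathbb{C}^n$. It therefore suffices to find, for generic $\ell$, a proper Zariski closed $Z\subsetneq\mathbb{C}^n$ such that for $x,y\in\mathbb{C}^n\setminus Z$, the equation $\ell(P(x))=\ell(P(y))$ forces $P(x)=P(y)$, and hence $x\sim y$.

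The key step is to bound, for generic $\ell$, the dimension of the ``bad set''
\[
B(\ell) := \{(x,y)\in\mathbb{C}^n\times\mathbb{C}^n : \ell(P(x))=\ell(P(y)),\; P(x)\neq P(y)\}.
\]
I would do this by a parameter-space incidence argument. Let $\mathcal{L} := \mathrm{Hom}(\mathbb{C}^N,\mathbb{C}^k)\cong\mathbb{C}^{kN}$ and consider
\[
\mathcal{I} := \{(\ell,x,y)\in\mathcal{L}\times\mathbb{C}^n\times\mathbb{C}^n : \ell(P(x)-P(y))=0,\; P(x)\neq P(y)\}.
\]
Projecting $\mathcal{I}$ onto the $(x,y)$-factor gives as image the open dense set $\{(x,y) : P(x)\neq P(y)\}\subseteq\mathbb{C}^{2n}$, and the fiber over each such point is the codimension-$k$ linear subspace $\{\ell : \ell(P(x)-P(y))=0\}\subseteq\mathcal{L}$. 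Thus $\mathcal{I}$ is irreducible of dimension $2n+k(N-1)$. Projecting $\mathcal{I}$ onto $\mathcal{L}$, the generic fiber is exactly $B(\ell)$, so either the projection is not dominant (in which case $B(\ell)=\varnothing$ for generic $\ell$, which is more than enough) or $\dim B(\ell) = \dim\mathcal{I}-\dim\mathcal{L} = 2n-k$.

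For the hypothesis $k>n+1$ this yields $\dim B(\ell)\leq 2n-k< n-1$, so $B(\ell)$ is constructible of dimension strictly less than $n$. The projections $\pi_1(B(\ell)),\pi_2(B(\ell))\subseteq\mathbb{C}^n$ are then constructible of dimension less than $n$, and their Zariski closures together form a proper Zariski closed subset $Z\subsetneq\mathbb{C}^n$. Setting $U := \mathbb{C}^n\setminus Z$, for $x,y\in U$ the condition $\ell(P(x))=\ell(P(y))$ forces $(x,y)\notin B(\ell)$, hence $P(x)=P(y)$ and $x\sim y$ by separation of $P$. The main obstacle is the incidence dimension count itself; the delicate point is that $\ker\ell$ always contains the origin, which is why one must excise the closed set $\{P(x)=P(y)\}$ before applying the fiber-dimension formula, and why the constant-rank structure of $\mathcal{I}$ over the complement of this set is needed to make the computation rigorous.
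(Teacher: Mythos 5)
Your proof is correct and establishes the theorem, but it takes a genuinely different route from the paper's. The paper's argument first homogenizes $P$ to $\tilde{P}:\mathbb{C}^{n+1}\to\mathbb{C}^N$, works with the affine cone $\mathrm{im}(\tilde{P})$ of dimension $n+1$, and then slices the quasi-projective set $\mathcal{P}=\{(X,Y)\in\mathrm{im}(\tilde P)^2: X\neq Y\}$ by $\ker(\tilde\ell)$, asserting $\dim(\ker(\tilde\ell)\cap\mathcal{P})=2n+2-k$ for generic $\ell$; the bad locus is then $\pi(\ker(\tilde\ell)\cap\mathcal{P})\subsetneq\mathrm{im}(\tilde P)$ when $2n+2-k<n+1$, i.e.\ $k>n+1$. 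You instead stay entirely affine and carry out the genericity count explicitly via the incidence variety $\mathcal{I}\subseteq\mathcal{L}\times\mathbb{C}^{2n}$, identifying it as a rank-$k(N-1)$ vector bundle over $\{P(x)\neq P(y)\}$ and then pushing forward to $\mathcal{L}$. This has two advantages: (i) it makes the dimension count over the parameter space of $\ell$'s rigorous rather than implicit (the paper's statement ``for generic $\ell$ we have $\dim(\ker(\tilde\ell)\cap\mathcal{P})=2n+2-k$'' is really a consequence of exactly the incidence computation you perform, and the paper's $\pi$ should be read as restricted to $\ker(\tilde\ell)\cap\mathcal{P}$); and (ii) since you never pass to the cone, your bad set lives in $\mathbb{C}^{2n}$ rather than in a $2n+2$-dimensional space, so your count actually yields the stronger threshold $k>n$ rather than $k>n+1$. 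Both suffice for the stated theorem; your version is the tighter of the two, as a one-variable example with $P(x)=(x,x^2)$ and $k=2$ already shows $k=n+1$ can work even though the paper's bound would require $k\geq n+2$.
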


This follows directly from Theorem 3.3 A in \cite{RWX19}. We sketch the argument below.

\begin{proof}
Let $\tilde{P}:\mathbb{C}^{n+1}\rightarrow\mathbb{C}^N$ be the homogeneous map obtained from $P$ by adding an extra variable and multiplying the entries of $P$ by the appropriate power of this new variable. Now we have that $\text{im}(\tilde{P})$ is a projective variety, and $\dim(\text{im}(\tilde{P}))=n+1$ since $G$ is finite so all fibers of $\tilde{P}$ are finite.

For a linear map $\ell:\mathbb{C}^N\rightarrow\mathbb{C}^k$ define the linear map $\tilde{\ell}:\mathbb{C}^N\times\mathbb{C}^N\rightarrow\mathbb{C}^k$ by $\tilde{\ell}(X,Y)=\ell(X-Y)$. Consider the quasi-projective variety $\mathcal{P}=\{(X,Y):X,Y\in\text{im}(\tilde{P}),X\neq Y\}$. Note that $(X,Y)\in\ker(\tilde{\ell})\cap\mathcal{P}$ if and only if $X,Y\in\text{im}(\tilde{P})$ and $X-Y\in\ker(\ell)$, i.e., $\ell X=\ell Y$. For generic $\ell$ we have $\dim(\ker(\tilde{\ell})\cap\mathcal{P})=2n+2-k$, so if $k>n+1$ this is strictly less than $n+1$. When this is the case the map $\pi:\mathcal{P}\rightarrow\text{im}(\tilde{P})$ given by $\pi(X,Y)=X$ cannot be surjective, so its image is a lower dimensional quasi-projective subvariety of $\text{im}(\tilde{P})$. Let $O=\text{im}(\tilde{P})\backslash\text{im}(\pi)$, then $U=\tilde{P}^{-1}(O)$.
\end{proof}
\bibliographystyle{plain}
\bibliography{SecondProjBib}
\end{document}